\documentclass[12pt]{article}
\usepackage{amsmath, amssymb}
\usepackage{amsfonts, ifthen, amsthm, vmargin}
\usepackage[usenames]{color}
\usepackage{ulem}
\newcommand{\dotcup}{\mathbin{\mathaccent\cdot\cup}}
\textwidth=166mm \textheight=235mm \evensidemargin=7.6mm
\oddsidemargin=17.6mm \topmargin=5mm \headheight=15pt \footskip=10mm

\DeclareFontFamily{OT1}{rsfs}{} \DeclareFontShape{OT1}{rsfs}{m}{n}{
<-7> rsfs5 <7-10> rsfs7 <10-> rsfs10}{}
\DeclareMathAlphabet\mathcurl{OT1}{rsfs}{m}{n}


\newtheorem{theorem}{Theorem}[section]
\newtheorem{lemma}[theorem]{Lemma}
\newtheorem{corollary}[theorem]{Corollary}
\newtheorem{definition}[theorem]{Definition}
\newtheorem{proposition}[theorem]{Proposition}

\newtheorem{remark}[theorem]{Remark}

\newtheorem{assumption}[theorem]{Assumption}

\theoremstyle{plain}
\newtheorem*{Sklar}{Sklar's Theorem}

\theoremstyle{plain}

\theoremstyle{plain}
\newtheorem*{propRD_II}{\cite{R2009} - Proposition 2.1}

\renewcommand\thefootnote{\fnsymbol{footnote}}


\newcommand\f[2]{\frac{#1}{#2}}
\newcommand\lb{\left(}
\newcommand\rb{\right)}
\newcommand\ignore[1]{}
\newcommand\R{\mathbb R}
\newcommand\M{\mathbb M}
\newcommand\N{\mathbb N}

\newcommand\E[2][\P]{\mathbb E_{#1}\left[ #2\right]}
\renewcommand\E{\mathbb E}
\renewcommand\P{\mathbb P}



\newcommand\ind{1\hspace{-2.5mm}1}


\newcommand\ldju[2]{\bigcup_{#1}^{#2}\hspace{-10.65mm\cdot}\hspace{8mm}}
\newcommand\cdju[2]{\bigcup_{#1}^{#2}\hspace{-5.0mm\cdot}\hspace{2mm}}

\newcommand\fadd[1]{{\color{red}#1}}

\makeatletter
\renewcommand{\@seccntformat}[1]{\csname the#1\endcsname.\hspace{1em}}%

\makeatother

\begin{document}


{\centering {\renewcommand\thefootnote{}\large\bfseries An analysis of the R\"{u}schendorf transform - with a view towards Sklar's Theorem
}

\par

\vspace{1em} {\scshape Frank Oertel\footnote{Deloitte \& Touche GmbH, FSI Assurance, Quantitative Services \& Valuation, D - 81669 Munich, E-mail: f.oertel@email.de}}

}
\vspace{1cm}
\noindent {\bf{Abstract:}} In many applications including financial risk measurement, copulas have shown to be a powerful building block to reflect multivariate dependence between several random variables including the mapping of tail dependencies.

A famous key result in this field is Sklar's Theorem. Meanwhile, there exist several approaches to prove Sklar's Theorem in its full generality.
An elegant probabilistic proof was provided by L. R\"{u}schendorf. To this end he implemented a certain ``distributional transform'' which naturally transforms an arbitrary distribution function $F$ to a flexible parameter-dependent function
which exhibits exactly the same jump size as $F$.

By using some real analysis and measure theory only (without involving the use of a given probability measure) we expand into the underlying rich structure of the distributional transform. 
Based on derived results from this analysis (such as Proposition \ref{thm:in_the_quantiles_range} and Theorem \ref{thm:inversion_almost_surely}) including a strong and frequent use of the \textit{right} quantile function, we revisit R\"{u}schendorf's proof of Sklar's theorem and provide some supplementing observations including a further characterisation of distribution functions (Remark \ref{thm:close_to_a_df}) and a strict mathematical description of their ``flat pieces'' (Corollary \ref{thm:description_of_flat_pieces_in_general} and Remark \ref{BCBS_and_newsvendor_problem}).
\\[1em]
{\bf{Keywords:}} Copulas, distributional transform, generalised inverse functions, Sklar's Theorem.
\\[1em] 
{\bf{MSC:}} 26A27, 60E05, 60A99, 62H05.

\section{Introduction}\label{intro}
The mathematical investigation of copulas started 1951, due to the following problem of M. Fr\'{e}chet: suppose, one is given $n$ random variables $X_1, X_2, \ldots, X_n$, all defined on the same probability space $(\Omega, \mathcal{F}, \P)$, such that each random variable has a (non-necessarily continuous) distribution function $F_i$ $(i = 1, 2, \ldots, n)$. What can then be said about the set of all possible $n$-dimensional distribution functions of the random vector $(X_1, X_2, \ldots, X_n)$ (cf. \cite{F1951})? This question has an immediate answer if the random variables were assumed to be independent, since in this case there exists a unique $n$-dimensional distribution function of the random vector $(X_1, X_2, \ldots, X_n)$, which is given by the product $\Pi_{i=1}^n F_i$. However, if the random variables are not independent, there was no clear answer to M. Fr\'{e}chet's problem.

In \cite{S1959}, A. Sklar introduced the expression ``copula'' (referring to a grammatical term for a word that links a subject and predicate), and provided answers to some of the questions of M. Fr\'{e}chet.

In the following couple of decades, copulas (which are precisely finite dimensional distribution functions with uniformly distributed marginals), were mainly used in the framework of probabilistic metric spaces (cf. e.\,g. \cite{SchS1974, SchS1983}). Later, probabilists and statisticians were interested in copulas, since copulas defined in a ``natural way'' nonparametric measures of dependence between random variables, allowing to include a mapping of tail dependencies. Since then, they began to play an important role in several areas of probability and statistics (including Markov processes and non-parametric statistics), in financial and actuarial mathematics (particularly with respect to the measurement of credit risk), and even in medicine and engineering.

One of the key results in the theory and applications of copulas, is Sklar's Theorem (which actually was proven in \cite{SchS1974} and not in \cite{S1959}). It says:
\begin{Sklar}
Let $F$ be a $n$-dimensional distribution function with marginals $F_1, \ldots, F_n$. Then there exists a copula $C_F$, such that for all $(x_1, \ldots, x_n) \in {\R}^n$ we have
\[
F(x_1, \ldots, x_n) = C_F(F_1(x_1), \ldots, F_n(x_n))\,.
\]
Furthermore, if $F$ is continuous, the copula $C_F$ is unique. Conversely, for any univariate distribution functions $H_1, \ldots, H_n$, and any copula $C$, the composition $C \circ (H_1, \ldots, H_n)$ defines a $n$-dimensional distribution function with marginals $H_1, \ldots, H_n$.
\end{Sklar}
\noindent Since the original proof of (the general non-continuous case of) Sklar's Theorem is rather complicated and technical, there have been several attempts to provide different and more lucidly appearing proofs, involving not only techniques from probability theory and statistics but also from topology and functional analysis (cf. \cite{DFsS2013}).

Among those different proofs of Sklar's Theorem, there is an elegant, yet rather short proof, provided by L. R\"{u}schendorf, originally published in \cite{R2009}. He provided a very intuitive, and primarily probabilistic approach which allows to treat general distribution functions (including discrete parts and jumps) in a similar way as continuous distribution functions. To this end, he applied a generalised ``distributional transform'' which - according to \cite{R2009} - has been used in statistics for a long time in relation to a construction of randomised tests. By making a consequent use of the properties of this generalised ``distributional transform'' \textit{together} with Proposition 2.1 in \cite{R2009}, the proof of Sklar's Theorem in fact follows immediately (cf. Theorem 2.2 in \cite{R2009}). Irrespectively of \cite{R2009} the same idea was used in the (again rather short) proof of Lemma 3.2 in \cite{MS1975}. All key inputs for the proof of Sklar's Theorem clearly are provided by Proposition 2.1 in \cite{R2009}. However, the proof of the latter result is rather difficult to reconstruct. It says:
\begin{propRD_II}\label{thm:Prop_2_1}
Let $X, V$ be two random variables, defined on the same probability space $(\Omega, {\mathcal{F}}, \P)$, such that $V \sim U(0,1)$ and $V$ is independent of $X$. Let $F$ be the distribution function of the random variable $X$. Then $U : = F_V(X) \sim U(0,1)$, and $X = F^{-}(U)$ $\P$-almost surely.
\end{propRD_II}
\noindent Here,
$F^{-}(\alpha) = \inf\{x \in {\R} : F(x) \geq \alpha \}$
denotes the (left-continuous) left $\alpha$-quantile of $F$ which in particular is the lowest generalised inverse of $F$ (cf. e.g. \cite[Chapter 4.4]{SchS1983}, respectively \cite[Definition 2]{KMP1999}).
In our paper we consistently adopt the very suitable symbolic notation of \cite{SchS1983}, respectively \cite{KMP1999} to identify generalised inverse functions in general (cf. \eqref{eqn:smallest_gen_inverse} and \eqref{eqn:gen_inverses}).

While studying (and reconstructing) carefully the proof of Sklar's Theorem built on Proposition 2.1 in \cite{R2009}, we recognise that it actually implements key mathematical objects which do not involve probability theory at all and play an important role beyond statistical applications.

The main contribution of our paper is to provide a thorough analysis of these mathematical building blocks by studying carefully properties of a real-valued (deterministic) function, used in the proof of Proposition 2.1 in \cite{R2009}; the so-called ``R\"{u}schendorf transform''. We reveal some interesting structural properties of this function which to the best of our knowledge have not been published before, such as e.\,g. Theorem \ref{thm:inversion_almost_surely} which actually is a result on Lebesgue-Stieltjes measures, strongly built on the role of the \textit{right} quantile function which seems to be not widely used in the literature (as opposed to the left quantile function). 

Equipped with Theorem \ref{thm:inversion_almost_surely} we then revisit the proof of Proposition 2.1 in \cite{R2009} (cf. also \cite[Chapter 1.1.2]{MS2012}). However, in our approach Proposition 2.1 in \cite{R2009} \textit{is an implication} of Theorem \ref{thm:inversion_almost_surely} and Lemma \ref{thm:prob_calc_and_rv_as_gen_inverse_value}. For sake of completeness we include a proof of Sklar's Theorem again (cf. also \cite[Chapter 1.1.2]{MS2012}) - yet as an implication of Theorem \ref{thm:inversion_almost_surely}, finally leading to Remark \ref{thm:obsv_on_Sklars_thm}.

Last but not least, by observing the significance of the jumps of the lowest generalised inverse, the proof of Theorem \ref{thm:inversion_almost_surely} indicates how to construct the $\P$-null set in Proposition 2.1 in \cite{R2009} explicitly - leading to Theorem \ref{thm:Rueschendorf}.
\section{The R\"{u}schendorf Transform}\label{RT}
At the moment let us completely ignore randomness and probability theory. We ``only'' are working within a subclass of real-valued functions, all defined on the real line, and with suitable subsets of the real line. 

Let $F : \R \longrightarrow \R$ be an arbitrary right-continuous and non-decreasing function. Let $x \in \R$. Since $F$ is non-decreasing, it is well-known that both, the left-hand limit
\[
F(x-) : = \lim_{z \uparrow x} F(z) = \sup\left\{F(z) : z \leq x \right\}\,,
\]
and the right-hand limit
\[
F(x+) : = \lim_{z \downarrow x} F(z) = \inf\left\{F(z) : z \geq x \right\}
\]
are well-defined real numbers, satisfying $F(x-) \leq F(x) \leq F(x+)$. Moreover, due to the assumed right-continuity of $F$, it follows that $F(x) = F(x+)$ for all $x \in \R$. $0 \leq \Delta F(x) : = F(x+) - F(x-) = F(x) - F(x-)$ denotes the (left-hand) ``jump'' of $F$ at $x$. We consider the following important transform of $F$:
\begin{definition}
Let $\lambda \in [0,1]$ and $x \in \R$. Put
\[
R_F(x, \lambda) : = F_\lambda(x) = F(x-) + \lambda \Delta F(x)\,.
\]
We call the real-valued function $R_F : \R \times [0, 1] \longrightarrow \R$ the R\"{u}schendorf transform of $F$. For given $\lambda \in [0,1]$ $F_\lambda : \R \longrightarrow \R$ is called the R\"{u}schendorf $\lambda$-transform of $F$.
\end{definition}
\noindent Clearly, we have the following equivalent representation of the R\"{u}schendorf $\lambda$-transform $F_\lambda$:
\[
F_\lambda(x) = (1 - \lambda)F(x-) + \lambda F(x) \mbox{ for all } x \in \R\,.
\]
In particular, for all $(x, \lambda) \in \R \times [0,1]$ the following inequality holds:
\begin{equation}\label{eq:quantile_estimation}
F(x-) \leq F_\lambda(x) \leq F(x)\,.
\end{equation}
Moreover, $F$ is continuous if and only if $F(x-) = F_\lambda(x)$ for all $(x, \lambda) \in \R \times (0,1]$, and for all $(x, \lambda) \in \R \times [0,1]$ we have $F_0(x) = F(x-) = F_\lambda(x-)$ and $F_1(x) = F(x) = F_\lambda(x+)$.
\begin{assumption}\label{thm:assumption_on_F}
In the following we assume throughout that $F$ is bounded on $\R$ (i.\,e., the range $F(\R)$ is a bounded subset of $\R$), implying that $F(\R) \subseteq [c_\ast, c^\ast]$ for some real numbers $c_\ast < c^\ast$. 
Moreover, let us assume that for any $\alpha \in (c_\ast, c^\ast)$ the set $\{x \in \R : F(x) \geq \alpha\}$ is non-empty and bounded from below.\footnote{In particular, $F$ cannot be a constant function on the whole real line.} WLOG, we may assume from now on that $c_\ast = 0$ and $c^\ast = 1$ (else we would have to work with the function $\frac{F-c_\ast}{c^\ast - c_\ast}$).
\end{assumption}
Although its proof (by contradiction) mostly is an easy calculus exercise with sequences, the following observation - which does not require a right-continuity assumption - should be explicitly noted (cf. also (cf. \cite{EH2013, FS2011, SchS1974})):
\begin{remark}\label{thm:close_to_a_df}
Let $G : \R \longrightarrow [0,1]$ an arbitrary non-decreasing function. Then the following statements are equivalent:
\begin{itemize}
\item[(i)] $\lim\limits_{x \to -\infty}G(x) = 0$ and $\lim\limits_{x \to \infty}G(x) = 1$;
\item[(ii)] For any $\alpha \in (0, 1)$ the sets $\{x \in \R : G(x) < \alpha\}$ and $\{x \in \R : G(x) \geq \alpha\}$ both are non-empty;
\item[(iii)] For any $\alpha \in (0, 1)$ the set $\{x \in \R : G(x) \geq \alpha\}$ is non-empty and bounded from below.
\item[(iv)] $G^{\wedge}(\alpha) : = \inf\{x \in {\R} : G(x) \geq \alpha \}$ is a well-defined real number for any $\alpha \in (0, 1)$. 
\end{itemize}
\end{remark}
Hence, given Assumption \ref{thm:assumption_on_F}, the assumed right-continuity of $F$ and Remark \ref{thm:close_to_a_df} imply that (possibly after shifting and stretching $F$ adequately) $F$ actually is a distribution function! Therefore, its generalised inverse function $F^{\wedge} : (0, 1) \longrightarrow \R$, given by
\begin{equation}\label{eqn:smallest_gen_inverse}
F^{\wedge}(\alpha) : = \inf\{x \in {\R} : F(x) \geq \alpha \},
\end{equation}
is well-defined and satisfies
\begin{equation}\label{eqn:gen_inverses}
-\infty < F^{\wedge}(\alpha) \leq F^{\wedge}(\alpha+) = \inf\{x \in {\R} : F(x) > \alpha \} = \sup\{x \in {\R} : F(x) \leq \alpha \} = : F^{\vee}(\alpha) < \infty
\end{equation}
for any $\alpha \in (0, 1)$ (cf. e.\,g. \cite{FKTW2012}).
Actually, since $F$ is assumed to be right-continuous, it follows that
\[
F^{\wedge}(\alpha) = \min\{x \in {\R} : F(x) \geq \alpha \}
\]
for all $\alpha \in (0, 1)$ (cf. \cite[Proposition 2.3\,(4)]{EH2013}).
Moreover, the following important inequality is satisfied:
\begin{equation}
\label{eqn:quantile_strict_ineq}
F \lb F^{\wedge}(\alpha) - \delta \rb < \alpha \leq F\lb F^{\wedge}(\alpha) + \varepsilon \rb\,
\end{equation}
for all $\alpha \in (0, 1)$, $\delta > 0$, and for all $\varepsilon > 0$. Hence,
\begin{equation}\label{eqn:quantile_ineq}
F\lb F^{\wedge}(\alpha)- \rb \leq \alpha \leq F\lb F^{\wedge}(\alpha)+\rb = F\lb F^{\wedge}(\alpha)\rb
\end{equation}
for all $\alpha \in (0, 1)$. Also recall from e.\,g. \cite{SchS1983} that $\{x \in \R : F(x) \geq \alpha\} = [F^{\wedge}(\alpha), \infty)$, respectively $\{x \in \R : F(x) < \alpha\} = (-\infty, F^{\wedge}(\alpha))$ for any $\alpha \in (0, 1)$.

Let us fix the distribution function $F : \R \longrightarrow [0, 1]$. Then by $J_F : = \{x \in \R : \Delta F(x) > 0\}$ we denote the set of all jumps of $F$ which is well-known to be at most countable.

Throughout the remaining part of our paper, we follow the notation of \cite{R2009} and put $\xi : = F^{\wedge}(\alpha)$ for fixed $0 < \alpha < 1$. By taking a closer look at $F^{\wedge}\big(F_\lambda(x)\big)$, we firstly note the following observation.
\begin{remark}\label{thm:inversion_inequality_on_Omega}
Let $\lambda \in [0,1]$ and $0 < F_\lambda(x) < 1$. Then
\[
F^{\wedge}\big(F_\lambda(x)\big) \leq x\,.
\]
\end{remark}
\begin{proof}
Fix $\lambda \in [0,1]$ and put $\alpha : = F_\lambda(x)$, where $x \in F_\lambda^{-1}((0, 1))$. Then $F^{\wedge}\big(\alpha\big)$ is well-defined. Since $F(x) \geq F_\lambda(x) = \alpha$, the claim follows.
\end{proof}
The next result shows an important part of the role of R\"{u}schendorf transform which can be more easily understood if one sketches the graph of $F$ including its jumps. Since $J_F$ is at most countable, it follows that $J_F = \{x_n : n\in M\}$, where either $card(M) < \infty$ or $M = \N$. By making use of this representation and the canonically defined function $F^{-} : \R \longrightarrow [0,1]$, $x \mapsto F(x-)$ (cf. also \cite[Chapter 4.4]{SchS1983}) we arrive at the following
\begin{proposition}\label{thm:in_the_quantiles_range}
\begin{itemize}
Let $x \in \R$. Then
\[
\big(F(x-), F(x)\big) \subseteq \big\{\alpha \in (0, 1) : x = F^{\wedge}(\alpha)\big\} \subseteq \big[F(x-), F(x)\big]\,.
\]
In particular, if $x_1 \not = x_2$ then $\big(F(x_1-), F(x_1)\big) \cap \big(F(x_2-), F(x_2)\big) = \emptyset$.
Moreover,   
\begin{eqnarray*}
\cdju{x \in J_F}{}\big(F(x-), F(x)\big) & = & \big\{\alpha \in (0, 1): \Delta F(F^\wedge(\alpha))>0 \text{ and } \alpha = F_\lambda(F^\wedge(\alpha)) \text{ for some } 0 < \lambda < 1\big\}\\
& =  & \big\{F_\lambda(x) : 0 < \lambda < 1 \text{ and } x \in J_F\big\}\\
& = & R_F\big(J_F \times (0,1)\big)\\
& = & (0, 1)\setminus (F(\R) \cup F^{-}(\R))\,,
\end{eqnarray*}
implying that 
the mapping $\Phi_F : (0,1)^{M} \longrightarrow \prod_{n=1}^{card(M)}\big(F(x_n-), F(x_n)\big)$, $\big(\lambda_n\big)_{n \in M} \mapsto \big({F_{\lambda_n}}(x_n)\big)_{n \in M}$ is well-defined and bijective. Its inverse $\Phi_F^{-1} : \prod_{n=1}^{card(M)}\big(F(x_n-), F(x_n)\big) \longrightarrow (0,1)^{M}$ is given by
\[
\Phi_F^{-1}\big(\big(\alpha_n\big)_{n \in M}\big) = \Big(\frac{\alpha_n - F(F^\wedge(\alpha_n)-)}{\Delta F(F^\wedge(\alpha_n))}\Big)_{n \in M}\,. 
\]
\end{itemize}
\end{proposition}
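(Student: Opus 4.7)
The plan is to prove the two inclusions in the first displayed line, and then use them as the engine for both the disjointness claim and the subsequent chain of equalities.

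For the inclusion $(F(x-), F(x)) \subseteq \{\alpha : x = F^{\wedge}(\alpha)\}$, I would pick $\alpha \in (F(x-), F(x))$. Since $F(x) \geq \alpha$, the point $x$ belongs to $\{y \in \R : F(y) \geq \alpha\}$, so $F^{\wedge}(\alpha) \leq x$. Conversely, monotonicity gives $F(y) \leq F(x-) < \alpha$ for every $y < x$, so no $y < x$ lies in that set and hence $F^{\wedge}(\alpha) \geq x$. The reverse inclusion $\{\alpha : x = F^{\wedge}(\alpha)\} \subseteq [F(x-), F(x)]$ is immediate from \eqref{eqn:quantile_ineq} applied at $\alpha$. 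The disjointness of the open intervals for $x_1 < x_2$ then follows because a common $\alpha$ would force $\alpha < F(x_1) \leq F(x_2-) < \alpha$, a contradiction.

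For the chain of equalities describing $\cdju{x \in J_F}{}(F(x-), F(x))$ I would proceed one equality at a time. The first uses the paragraph above: membership in some $(F(x-), F(x))$ with $x \in J_F$ forces $x = F^{\wedge}(\alpha)$, hence $\Delta F(F^{\wedge}(\alpha)) > 0$, and the condition $\alpha \in (F(x-), F(x))$ is precisely $\alpha = F_\lambda(x) = F_\lambda(F^{\wedge}(\alpha))$ for the unique $\lambda = (\alpha - F(x-))/\Delta F(x) \in (0,1)$. The second equality is a mere reindexing, and the third is the definition of $R_F$. For the fourth equality, $R_F(J_F \times (0,1)) = (0,1) \setminus (F(\R) \cup F^{-}(\R))$, the forward direction observes that if $\alpha = F_\lambda(x)$ with $x \in J_F$ and $\lambda \in (0,1)$ then $F(x-) < \alpha < F(x)$, and monotonicity (split according to $y < x$ versus $y \geq x$) prevents $\alpha$ from being of the form $F(y)$ or $F(y-)$. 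Conversely, given $\alpha \in (0,1) \setminus (F(\R) \cup F^{-}(\R))$, setting $x := F^{\wedge}(\alpha)$ and invoking \eqref{eqn:quantile_ineq} together with the exclusion of the two endpoints yields $F(x-) < \alpha < F(x)$, so $x \in J_F$ and $\alpha = F_\lambda(x)$ with $\lambda := (\alpha - F(x-))/\Delta F(x) \in (0,1)$.

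For the final statement about $\Phi_F$, the first paragraph shows that for every fixed $x_n \in J_F$ the map $\lambda_n \mapsto F_{\lambda_n}(x_n) = F(x_n-) + \lambda_n \Delta F(x_n)$ is an affine bijection from $(0,1)$ onto $(F(x_n-), F(x_n))$; applying this coordinate-wise yields that $\Phi_F$ is well-defined and bijective. The formula for $\Phi_F^{-1}$ follows by solving for $\lambda_n$, and the identification $x_n = F^{\wedge}(\alpha_n)$ (again from the first paragraph) is what converts the naive expression $(\alpha_n - F(x_n-))/\Delta F(x_n)$ into the form stated in the proposition. I do not expect a genuine obstacle; the only delicate point is the fourth equality, where one must rule out $\alpha \in F(\R) \cup F^{-}(\R)$ using nothing beyond the strict inequalities $F(x-) < \alpha < F(x)$ and monotonicity of $F$, so that the argument remains entirely deterministic.
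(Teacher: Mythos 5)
Your proposal is correct and follows essentially the same route as the paper's own proof: both establish the two inclusions via monotonicity of $F$ together with \eqref{eqn:quantile_ineq}, then obtain the chain of equalities through the explicit affine parameter $\lambda(\alpha) = \bigl(\alpha - F(F^{\wedge}(\alpha)-)\bigr)/\Delta F(F^{\wedge}(\alpha))$, and conclude the bijectivity of $\Phi_F$ coordinate-wise. The only cosmetic differences are that you argue the first inclusion directly instead of citing \cite[Proposition 2.3\,(5)]{EH2013}, and you spell out the reverse containment in the fourth equality slightly more explicitly than the paper does.
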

\begin{proof}
To prove the first set inclusion, we may assume without loss of generality that $F$ is not continuous in $x$. So, let $F(x-) < \alpha < F(x)$. Then $0 < \alpha < 1$ (else we would obtain the contradiction $\alpha \leq 0 \leq F(x-)$, respectively $F(x) \leq 1 \leq \alpha$) and $F(x - \frac{1}{n}) < \alpha \leq F(x)$ for all $n \in \N$. Hence, $F^{\wedge}(\alpha) \leq x < F^{\wedge}(\alpha) + \frac{1}{n}$ for all $n \in \N$ (cf. \cite[Proposition 2.3\,(5)]{EH2013}), implying the first inclusion. Now let $\alpha \in (0, 1)$ such that $x = F^{\wedge}(\alpha)$. Due to \eqref{eqn:quantile_ineq} it follows that
\[
F\lb x- \rb \leq \alpha \leq F\lb x \rb\,,
\]
which gives the second set inclusion.

To verify the representation of the disjoint union $\ldju{x \in J_F}{}\big(F(x-), F(x)\big)$ let $\alpha \in \big(F(x-), F(x)\big)$ for some $x \in J_F$. Then $x = F^{\wedge}(\alpha) = : \xi$ and hence $\Delta F(\xi) > 0$ and $\alpha \in \big(F(\xi-), F(\xi)\big)$. Put 
\[
\lambda(\alpha) : = \frac{\alpha - F(\xi-)}{\Delta F(\xi)}\,.
\]
Then $0 < \lambda(\alpha) < 1$ and 
\[
\alpha = F_{\lambda(\alpha)}(\xi) = F_{\lambda(\alpha)}(x)\,.
\]
Furthermore, a straightforward application of the inequality \eqref{eqn:quantile_strict_ineq} (together with \eqref{eqn:quantile_ineq} and the monotonicity assumption on $F$) shows the graphically clear fact that there is no $x \in J_F$ such that $(F(x-), F(x))$ contains elements of the form $F(z)$, respectively $F(w-)$ for some $z,w \in \R$.
Now, given the construction of $\lambda(\alpha)$ above and the listed properties of any of the sets $\big(F(x_n-), F(x_n)\big)$, the assertion about the mapping $\Phi_F$ follows immediately.
\end{proof}
\begin{definition}
Let $\alpha \in (0, 1)$ and $\lambda \in [0,1]$. Put:
\[
A_{\lambda, \alpha} : = \left\{x \in \R :  F_\lambda(x) \leq \alpha \right\}\,.
\]
\end{definition}
Firstly note that $A_{\lambda, \alpha}$ is non-empty. To see this, consider any $x < \xi = F^{\wedge}(\alpha)$. Then $x \leq \xi - \delta$ for some $\delta > 0$. Hence, $F_\lambda(x) \leq F(x) \stackrel{\eqref{eqn:quantile_strict_ineq}}{<} \alpha$. To motivate the following representation of the set $A_{\lambda, \alpha}$, let us assume for the moment that $F$ is continuous at $\xi$. Due to \eqref{eqn:quantile_ineq}, it follows that $F(\xi) = \alpha$. Hence, in this case, $F_\lambda(\xi) = F(\xi) = \alpha$, implying that $\xi = F^{\wedge}(\alpha) \in A_{\lambda, \alpha}$.

However, in the general (non-continuous) case, $\xi = F^{\wedge}(\alpha) $ need not be an element of the set $A_{\lambda, \alpha}$. Therefore (by fixing $\alpha \in (0, 1)$ and $\lambda \in [0,1]$), we are going to represent the set $A_{\lambda, \alpha}$ as a disjoint union of the following three subsets of the real line:
\[
A_{\lambda, \alpha}^{+} : = A_{\lambda, \alpha} \cap \{x \in \R :  x > \xi\}\,,
\]
\[
A_{\lambda, \alpha}^{\sim} : = A_{\lambda, \alpha} \cap \{x \in \R :  x = \xi\}\,,
\]
and
\[
A_{\lambda, \alpha}^{-} : = A_{\lambda, \alpha} \cap \{x \in \R :  x < \xi\}\,.
\]
Thus,
\[
A_{\lambda, \alpha} = A_{\lambda, \alpha}^{+} \dotcup A_{\lambda, \alpha}^{\sim} \dotcup A_{\lambda, \alpha}^{-}\,.
\]
Next, we are going to simplify the sets $A_{\lambda, \alpha}^{+}, A_{\lambda, \alpha}^{\sim}$ and $A_{\lambda, \alpha}^{-}$ as far as possible. To this end, we have to analyse carefully the jump $\Delta F^{\wedge}(\alpha)$, implying that we have to check $\xi = F^{\wedge}(\alpha) = F^{\wedge}(\alpha-)$ against the (finite) value of the \textit{largest generalised inverse of $F$} (cf. \cite{FKTW2012} and \cite[Chapter 4.4]{SchS1983})
\[
\eta : = F^{\vee}(\alpha) = \inf\{x \in \R : F(x) > \alpha\} = \sup\{x \in \R : F(x) \leq \alpha\} = F^{\wedge}(\alpha+)\,.
\]
The inequality \eqref{eqn:quantile_ineq} is also satisfied for $\eta$ (cf. \cite[Lemma A.\,15]{FS2011}):
\begin{equation}\label{eqn:quantile_ineq_for_eta}
F\lb \eta- \rb \leq \alpha \leq F\lb \eta\rb\,.
\end{equation}
Note that since $F$ is a distribution function, $\eta$ (respectively $\xi$) is precisely the right (respectively left) $\alpha$-quantile of $F$.

Clearly, $\{x \in \R :  x > \xi \mbox{ and } F(x) = \alpha\} \subseteq {A_{\lambda, \alpha}^{+}}$ for every $\lambda \in [0,1]$. However, if $0 < \lambda \leq 1$, we even obtain equality of both sets - since:
\begin{lemma}\label{thm:flat_pieces_of_F}
Let $0 < \lambda \leq 1$ and $\alpha \in (0, 1)$. Put $\xi : = F^{\wedge}(\alpha)$ and $\eta : = F^{\vee}(\alpha)$.
\begin{itemize}
\item[(i)] If $\xi < \eta$, then $F(\xi) = \alpha = F(\eta-) \notin \ldju{x \in J_F}{}\big(F(x-), F(x)\big)$ and $\emptyset \not= \{x \in \R :  x > \xi \mbox{ and } F(x) = \alpha\}$. Moreover, the restricted function $F\vert_{A_{\lambda, \alpha}^{+}} : A_{\lambda, \alpha}^{+} \longrightarrow \R$ is continuous, and
\begin{equation}\label{eqn:flat_pieces_of_F_Part_1}
A_{\lambda, \alpha}^{+} = \{x \in \R :  x > \xi \mbox{ and } F(x) = \alpha\} = \begin{cases}\big(\xi, \eta\big)&\text{if
  }F(\eta) > \alpha\\ \big(\xi, \eta\big]&\text{if }F(\eta) = \alpha\end{cases}
\end{equation}
\item[(ii)] If $\xi = \eta$, then $A_{\lambda, \alpha}^{+} = \emptyset$.
\item[(iii)] Furthermore,
\begin{eqnarray*}
J_F & = & \big\{x \in \R :  F^{\wedge}(u) = x = F^{\vee}(u) \text{ and } \Delta F(F^{\wedge}(u)) > 0 \text{ for some } u \in (0, 1)\big\}\\
& = & \big\{x \in \R :  F^{\wedge}(u) = x = F^{\vee}(u) \text{ and } \Delta F(F^{\vee}(u)) > 0 \text{ for some } u \in (0, 1)\big\}\,.
\end{eqnarray*}
\end{itemize}
In particular, the following statements are equivalent:
\begin{itemize}
\item[(a)] $0 < \Delta F^{\wedge}(\alpha) = \eta - \xi$;
\item[(b)] $\{x \in \R :  x > \xi \mbox{ and } F(x) = \alpha\} \not= \emptyset$.
\end{itemize}
\end{lemma}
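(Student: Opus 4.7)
The plan is to dispatch (i), (ii) and (iii) in sequence and then read off the equivalence (a)$\Leftrightarrow$(b) as an immediate corollary. The whole argument hinges on a single elementary observation: if $\xi<\eta$ then $F$ is identically equal to $\alpha$ on the open interval $(\xi,\eta)$. This follows by sandwiching: inequality \eqref{eqn:quantile_ineq} gives $F(\xi)\ge\alpha$ and hence $F(x)\ge\alpha$ for every $x>\xi$ by monotonicity, while the representation $\eta=\sup\{x:F(x)\le\alpha\}$ gives $F(x)\le\alpha$ for every $x<\eta$. Right-continuity at $\xi$ then upgrades this to $F(\xi)=\alpha$, and taking left limits at $\eta$ yields $F(\eta-)=\alpha$; because $\alpha=F(\xi)\in F(\R)$, the last identity in Proposition \ref{thm:in_the_quantiles_range} excludes $\alpha$ from $\cdju{x\in J_F}{}\big(F(x-),F(x)\big)$.

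For the explicit description of $A_{\lambda,\alpha}^{+}$ in (i), I would split into the three cases $\xi<x<\eta$, $x=\eta$ and $x>\eta$. The first case is immediate from the constancy just established. The boundary case is the interesting one: using $0<\lambda\le 1$ together with $F(\eta-)=\alpha$ one computes $F_\lambda(\eta)=(1-\lambda)\alpha+\lambda F(\eta)$, which lies in $A_{\lambda,\alpha}$ exactly when $F(\eta)=\alpha$. For $x>\eta$, picking any $z\in(\eta,x)$ forces $F(z)>\alpha$ by the definition of $\eta$, so $F(x-)\ge F(z)>\alpha$ and $F_\lambda(x)>\alpha$. Continuity of $F\vert_{A_{\lambda,\alpha}^{+}}$ is then automatic since $F\equiv\alpha$ on this set. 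Part (ii) falls out of the same argument: under $\xi=\eta$, every $x>\xi$ already satisfies $F(x-)>\alpha$, so $A_{\lambda,\alpha}^{+}=\emptyset$.

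Part (iii) reduces to showing that for each $x_0\in J_F$ there exists $u\in(0,1)$ with $F^{\wedge}(u)=F^{\vee}(u)=x_0$; the reverse inclusion is trivial. I would pick any $u\in\big(F(x_0-),F(x_0)\big)\cap(0,1)$, a non-empty set because $\Delta F(x_0)>0$ and $F$ takes values in $[0,1]$ by Assumption \ref{thm:assumption_on_F}; Proposition \ref{thm:in_the_quantiles_range} then gives $F^{\wedge}(u)=x_0$, and the strict sandwich $F(y)\le F(x_0-)<u<F(x_0)\le F(y')$ valid for $y<x_0<y'$ gives $F^{\vee}(u)=x_0$. Finally, the equivalence (a)$\Leftrightarrow$(b) is a two-line corollary: the identity $\Delta F^{\wedge}(\alpha)=\eta-\xi$ holds in general via \eqref{eqn:gen_inverses} together with the left-continuity of $F^{\wedge}$, so (a) is equivalent to $\xi<\eta$, which implies (b) by (i); conversely any $x>\xi$ with $F(x)=\alpha$ lies in $\{y:F(y)\le\alpha\}$ and hence satisfies $x\le\eta$, forcing $\xi<\eta$. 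The main obstacle I anticipate is the bookkeeping in the boundary case $x=\eta$ of (i): both $F(\eta)=\alpha$ and $F(\eta)>\alpha$ are \emph{a priori} possible, and this dichotomy is precisely what generates the two cases in \eqref{eqn:flat_pieces_of_F_Part_1}.
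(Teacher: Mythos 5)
Your proof is correct, and it organizes the argument differently from the paper's. The paper first shows (by contradiction) that $F\vert_{A_{\lambda, \alpha}^{+}}$ is continuous --- a jump at $x\in A_{\lambda, \alpha}^{+}$ with $\lambda>0$ would force $F(x-)<F_\lambda(x)\leq\alpha$ while monotonicity gives $F(x-)\geq F(\xi)\geq\alpha$ --- and only then deduces $A_{\lambda, \alpha}^{+}=\{x\in\R : x>\xi \mbox{ and } F(x)=\alpha\}$, completing the interval description by citing \cite[Proposition 2.3\,(6)]{EH2013} for the upper bound $x\leq\eta$; part (ii) is then obtained by a contrapositive detour through (i). You instead first establish the constancy $F\equiv\alpha$ on $(\xi,\eta)$ by sandwiching between $F(\xi)\geq\alpha$ and $\eta=\sup\{x\in\R : F(x)\leq\alpha\}$, and then compute $A_{\lambda, \alpha}^{+}$ pointwise via the trichotomy $x\in(\xi,\eta)$, $x=\eta$, $x>\eta$; continuity of the restriction comes for free because the restriction is constant, the upper bound $x\leq\eta$ follows from $F(x-)>\alpha$ for $x>\eta$ without any external citation, and (ii) is just the degenerate instance of the same trichotomy. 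The hypothesis $0<\lambda$ enters both proofs at exactly one place, but a different one: in the paper inside the continuity argument, in yours only at the boundary point via $F_\lambda(\eta)=(1-\lambda)\alpha+\lambda F(\eta)$. Your route is more direct and self-contained; the paper's route records the slightly stronger intermediate fact that $F$ itself is continuous at every point of $A_{\lambda, \alpha}^{+}$ (i.e. $\Delta F(x)=0$ there), not merely that the restricted map is continuous. Your treatments of (iii) and of the equivalence (a)$\Leftrightarrow$(b) --- via Proposition \ref{thm:in_the_quantiles_range}, \eqref{eqn:gen_inverses} and the left-continuity of $F^{\wedge}$ --- match the paper's in substance, with (iii) spelled out in more detail than the paper's one-line reduction.
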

\begin{proof}
Put $B : = \{x \in \R :  x > \xi \mbox{ and } F(x) = \alpha\}$. Clearly, we always have $B \subseteq A_{\lambda, \alpha}^{+}$.

To verify (i), let $\xi < \eta$. Then $\xi < z_0 < \eta = \inf\{x \in \R : F(x) > \alpha\}$ for some $z_0 \in \R$. Thus, $F(\xi) \leq F(z_0) \leq \alpha \leq F(\xi)$, implying that $z_0 \in B$ and $F(\xi) = \alpha$. Assume by contradiction that $F(\eta-) < \alpha$. Then $F(\eta - \varepsilon) < F(\xi)$ for all $\varepsilon > 0$, implying the contradiction $\eta \leq \xi$. Hence, $F(\eta-) = \alpha$. Proposition \ref{thm:in_the_quantiles_range} therefore implies that $\alpha = F(\xi) \notin \ldju{x \in J_F}{}\big(F(x-), F(x)\big)$. 

Let $x \in A_{\lambda, \alpha}^{+} \supseteq B$. Assume by contradiction that $F\vert_{A_{\lambda, \alpha}^{+}}$ is not continuous at $x$. Then $F(x-) < F(x-) + \lambda \Delta F(x) = F_\lambda(x) \leq \alpha$ (since $\lambda >0$). Since $x > \xi$, we have $\xi \leq x - \f{1}{n}$ for some $n \in \N$. Thus,
\[
\alpha \leq F(\xi) \leq F\Big(\xi + \f{1}{2n}\Big) \leq F\Big(x - \f{1}{2n}\Big)\,.
\]
Hence, $\alpha \leq F(x-) < \alpha$, which is a contradiction. Thus, the restricted function $F\vert_{A_{\lambda, \alpha}^{+}}$ is continuous on $A_{\lambda, \alpha}^{+}$. Let $u \in A_{\lambda, \alpha}^{+}$. Since $F$ is continuous at $u$, it follows that
\[
\alpha \leq F(\xi) \leq F(u) = F_\lambda(u) \leq \alpha\,.
\]
Thus, $\emptyset \not= A_{\lambda, \alpha}^{+} = B$.

To prove (ii), suppose that $A_{\lambda, \alpha}^{+}$ is non-empty. The previous calculations show that the existence of an element $u_0 \in A_{\lambda, \alpha}^{+}$ already implies $F(\xi) = F(u_0) = \alpha$.
Consequently, $\eta = F^{\vee}(\alpha) = \sup\{x \in \R : F(x) \leq \alpha \}$ cannot coincide with $\xi = F^{\wedge}(\alpha)$ (since $\xi < u_0 \leq \eta$), implying that $\xi < \eta$.

To finish the proof of (i), we have to verify \eqref{eqn:flat_pieces_of_F_Part_1}. To this end, let $\xi < \eta$ and $x \in (\xi, \eta)$. Then there exists $\delta > 0$ such that $\xi < x-\delta < x < x+\delta <  \eta = F^{\vee}(\alpha) = \inf\{u \in \R : F(u) > \alpha\}$. Consequently, $\alpha \leq F(\xi) \leq F(x-\delta) \leq F(x) \leq F(x+\delta) \leq \alpha$. Thus,
\[
\big(\xi, \eta\big) \subseteq \{x \in \R :  x > \xi \mbox{ and } F(x) = \alpha\} = B\,.
\]
Moreover, \cite[Proposition 2.3\,(6)]{EH2013} implies that
\[
B = \{x \in \R :  x > \xi \mbox{ and } F(x) = \alpha\} \subseteq \big(\xi, \eta \big]\,.
\]
Hence,
\[
\big(\xi, \eta \big) \subseteq B \subseteq \big(\xi, \eta \big]\,.
\]
If $F(\eta) > \alpha$, then $\eta \notin B$ and hence $B = \big(\xi, \eta \big)$. If $F(\eta) = \alpha$, then $\xi < \eta \in B$ and hence $B = \big(\xi, \eta \big]$.

Statement (iii) is a direct implication of (i) and Proposition \ref{thm:in_the_quantiles_range}.
\end{proof}
Regarding a visualisation of Lemma \ref{thm:flat_pieces_of_F} consider the set $M_\alpha : = \{x \in \R :  x \leq \xi \mbox{ and } F(x) = \alpha\} \stackrel{\eqref{eqn:quantile_strict_ineq}}{=} \{x \in \R :  x = \xi \mbox{ and } F(x) = \alpha\} \in \big\{\emptyset, \{\xi\}\big\}$. Note that
\[
\{x \in \R : F(x) = \alpha\} = \{x \in \R :  x > \xi \mbox{ and } F(x) = \alpha\} \dotcup M_\alpha\,.
\]
Thus, by joining Lemma \ref{thm:flat_pieces_of_F} with Proposition \ref{thm:in_the_quantiles_range} we immediately obtain the following tangible mathematical description of the (preimages of) ``flat pieces'' of $F$ (and hence allowing us to perfect related observations from e.\,g.\cite[Chapter 4.4]{SchS1983} and \cite{EH2013}, Proposition 2.3, (6) coherently):
\begin{corollary}\label{thm:description_of_flat_pieces_in_general}
Let $0 < \lambda \leq 1$ and $\alpha \in (0, 1)$. Put $\xi : = F^{\wedge}(\alpha)$ and $\eta : = F^{\vee}(\alpha)$.
\begin{itemize}
\item[(i)] If $\xi < \eta$, then
\begin{equation*}
\emptyset \not= \{x \in \R : F(x) = \alpha\} = A_{\lambda, \alpha}^{+} \dotcup \{\xi\} = \begin{cases}
\big[\xi, \eta\big)&\text{if }F(\eta) > \alpha\\ \big[\xi, \eta\big]&\text{if }F(\eta) = \alpha\end{cases}
\end{equation*}
\item[(ii)] If $\xi = \eta$, then
\begin{equation*}
\{x \in \R : F(x) = \alpha\} = \begin{cases}\emptyset&\text{if
  }F(\eta) > \alpha\\ \,\{\xi\}&\text{if
  }F(\eta) = \alpha \end{cases}
\end{equation*}
\end{itemize}
In particular, $F(\xi) = \alpha$ if and only if $\{x \in \R : F(x) = \alpha\} \not= \emptyset$, and
$\eta - \xi = \Delta F^{\wedge}(\alpha) = 0$ if and only if $\{x \in \R : F(x) = \alpha\} \in \{\emptyset, \{\xi\}\}$, and if $\eta > \xi$, then $\Delta F(\eta) = 0$ if and only if $F(\eta) = \alpha$. 
\end{corollary}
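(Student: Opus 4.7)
The plan is to combine the set decomposition noted just before the corollary,
\[
\{x \in \R : F(x) = \alpha\} = \{x \in \R : x > \xi \mbox{ and } F(x) = \alpha\} \dotcup M_\alpha,
\]
where $M_\alpha \in \{\emptyset, \{\xi\}\}$, with the explicit description of the right-hand component $\{x > \xi : F(x) = \alpha\} = A_{\lambda, \alpha}^{+}$ supplied by Lemma \ref{thm:flat_pieces_of_F}. Once this is set up, the two cases $\xi < \eta$ and $\xi = \eta$ essentially read off, and the three supplementary equivalences follow by inspection.

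For (i), assume $\xi < \eta$. Lemma \ref{thm:flat_pieces_of_F}(i) yields simultaneously $F(\xi) = \alpha$, and hence $M_\alpha = \{\xi\}$, together with the explicit form of $A_{\lambda, \alpha}^{+}$ as $(\xi, \eta)$ or $(\xi, \eta]$ according to whether $F(\eta) > \alpha$ or $F(\eta) = \alpha$. Forming the disjoint union with $\{\xi\}$ then immediately produces the half-open or closed interval from $\xi$ to $\eta$ claimed in the statement.

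For (ii), assume $\xi = \eta$. Lemma \ref{thm:flat_pieces_of_F}(ii) forces $A_{\lambda, \alpha}^{+} = \emptyset$, so that $\{F = \alpha\}$ collapses to $M_\alpha$. The dichotomy is then settled by checking $F(\xi)$: since $\xi = \eta$ and $\alpha \leq F(\eta) = F(\xi)$ by \eqref{eqn:quantile_ineq}, the equality $F(\xi) = \alpha$ holds exactly when $F(\eta) = \alpha$, which distinguishes the two sub-cases.

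Finally, the three supplementary equivalences follow by a short case analysis against (i) and (ii). The set $\{F = \alpha\}$ is non-empty precisely in case (i) or in the $F(\eta) = \alpha$ branch of (ii), and in both situations $F(\xi) = \alpha$; conversely $F(\xi) = \alpha$ places $\xi$ in the set. The equality $\Delta F^{\wedge}(\alpha) = \eta - \xi = 0$ amounts to being in case (ii), which is exactly the case where $\{F = \alpha\} \subseteq \{\xi\}$. For the last equivalence, when $\xi < \eta$ Lemma \ref{thm:flat_pieces_of_F}(i) supplies $F(\eta-) = \alpha$, so $F(\eta) = \alpha + \Delta F(\eta)$ and the claim is immediate. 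The only real work lies in the behaviour at the (potentially discontinuous) point $\eta$, but that is already handled inside Lemma \ref{thm:flat_pieces_of_F}; from that point onward the corollary is bookkeeping.
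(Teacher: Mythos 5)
Your proposal is correct and follows essentially the same route as the paper, which likewise obtains the corollary by combining the decomposition $\{x \in \R : F(x) = \alpha\} = \{x > \xi : F(x) = \alpha\} \dotcup M_\alpha$ (with $M_\alpha \in \{\emptyset, \{\xi\}\}$) with the case analysis of Lemma \ref{thm:flat_pieces_of_F}. The supplementary equivalences, including the use of $F(\eta-) = \alpha$ to settle $\Delta F(\eta) = 0 \Leftrightarrow F(\eta) = \alpha$, are handled exactly as intended.
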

\begin{remark}\label{BCBS_and_newsvendor_problem}
Let $\alpha \in (0,1)$. Then, according to \cite[Corollary 1.1]{ACS2007} for a large class of distribution functions $F$ any non-empty set $[\xi, \eta] = [F^\wedge(\alpha), F^\vee(\alpha)]$ even emerges as a set of optimal solutions of the so called ``single period newsvendor problem'' which asks for the minimisation of coherent risk measures, such as the conditional-value-at-risk (which coincides with Expected Shortfall), corresponding to a cost function, induced by random demand. Here, one should recall that recently the Basel Committee on Banking Supervision (BCBS) suggested in their updated consultative document ``Fundamental review of the trading book'' to implement Expected Shortfall at $\alpha = 97.5\%$ in a bank's internal market risk model to calculate its minimum capital requirements with respect to market risk.
\end{remark}  
  
Let ${\mathcal{B}}(\R)$ denote the set of all Borel subsets of $\R$. In the following, let $\mu_F : {\mathcal{B}}(\R) \longrightarrow [0, \infty]$ be the Lebesgue-Stieltjes measure of $F$. For a detailed description of the construction and properties of the Lebesgue-Stieltjes measure (including Lebesgue-Stieltjes integration), we refer the reader to e.\,g. \cite{AD2000} and \cite{B1995}. For the convenience of the reader, we recall the following fundamental result (cf. \cite[Theorem 12.4]{B1995}):
\begin{theorem}[Lebesgue-Stieltjes measure]
Let $G: \R \longrightarrow \R$ be an arbitrary non-decreasing and right-continuous function. Then there exists a unique Borel measure $\mu_G$ satisfying
\[
\mu_G\big( (x, y]\big) = G(y) - G(x)
\]
for all $x, y \in \R$.
\end{theorem}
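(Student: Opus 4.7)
The plan is to follow the standard Carath\'{e}odory route. First I would define a set function $\mu_G^0$ on the semi-ring $\mathcal{S} := \{(x,y] : x \leq y\} \cup \{\emptyset\}$ by the formula $\mu_G^0((x,y]) := G(y) - G(x)$. Since $G$ is non-decreasing, this is non-negative, and one extends it to the algebra $\mathcal{A}$ of finite disjoint unions of such half-open intervals in the obvious additive way. The bulk of the work is then to show that $\mu_G^0$ is a \emph{premeasure} on $\mathcal{A}$, i.e.\ countably additive; once this is done, the Carath\'{e}odory extension theorem produces a measure $\mu_G$ on the generated $\sigma$-algebra, and the latter equals $\mathcal{B}(\mathbb{R})$ since the half-open intervals $(x,y]$ generate $\mathcal{B}(\mathbb{R})$.

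The key step, and the place where right-continuity of $G$ is indispensable, is countable \emph{sub}additivity. Suppose $(x,y] \subseteq \bigcup_{n \in \mathbb{N}} (x_n, y_n]$; I want to show $G(y) - G(x) \leq \sum_{n \in \mathbb{N}}\bigl(G(y_n) - G(x_n)\bigr)$. Fix $\varepsilon > 0$. By right-continuity of $G$ at $x$ pick $\delta > 0$ with $G(x+\delta) - G(x) < \varepsilon$; similarly, for each $n$ choose $\delta_n > 0$ with $G(y_n + \delta_n) - G(y_n) < \varepsilon/2^n$. Then the compact interval $[x+\delta, y]$ is covered by the open intervals $(x_n, y_n + \delta_n)$, so by Heine--Borel a finite subcollection already covers it. A standard telescoping argument on this finite cover yields $G(y) - G(x+\delta) \leq \sum_{n=1}^{N}\bigl(G(y_n+\delta_n) - G(x_n)\bigr)$, and combining the two estimates and sending $\varepsilon \downarrow 0$ gives the required inequality. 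The reverse inequality (countable superadditivity) is elementary from finite additivity and monotonicity of $G$.

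For uniqueness I would invoke Dynkin's $\pi$-$\lambda$ theorem. The collection $\mathcal{P} := \{(x,y] : x \leq y\}$ is a $\pi$-system generating $\mathcal{B}(\mathbb{R})$, and any two Borel measures agreeing on $\mathcal{P}$ must agree on $\sigma(\mathcal{P}) = \mathcal{B}(\mathbb{R})$ provided one has a $\sigma$-finite exhaustion. Such an exhaustion is available here via $\mathbb{R} = \bigcup_{k \in \mathbb{N}} (-k,k]$, on each of which both candidate measures take the finite common value $G(k) - G(-k)$.

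I expect the main obstacle to be the countable subadditivity step: the compactness/right-continuity trick is the essential analytical ingredient, and without right-continuity the premeasure would fail to be countably additive (consider $(0,1] = \bigcup_n (1/(n+1), 1/n]$ together with a $G$ having a strictly positive left-hand jump at $0$). Everything else, including the passage from the algebra to $\mathcal{B}(\mathbb{R})$ via Carath\'{e}odory and the uniqueness via $\pi$-$\lambda$, is then routine.
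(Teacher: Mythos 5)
Your sketch is correct and is precisely the standard Carath\'{e}odory argument: a premeasure on the semi-ring of half-open intervals, countable subadditivity via right-continuity plus a Heine--Borel compactness/telescoping step, and uniqueness via the $\pi$-$\lambda$ theorem with the $\sigma$-finite exhaustion $\R=\bigcup_{k}(-k,k]$. The paper does not prove this statement itself but recalls it from \cite[Theorem 12.4]{B1995}, whose proof is essentially the one you outline, so your proposal matches the intended argument.
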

Clearly, this crucial result implies that $\mu_G\big( (x, y)\big) = G(y-) - G(x)$ and hence
\[
\mu_G\big( \{y\}\big) = \mu_G\big((x, y]\big) - \mu_G\big( (x, y)\big) = G(y) - G(y-) = \Delta G(y)
\]
for all $y \in \R$. Moreover, $\mu_G(\R) = 0$ if and only if $G$ is a constant function on $\R$.

Returning to our distribution function $F$, a direct application of $\mu_F$ leads to another important implication of Lemma \ref{thm:flat_pieces_of_F}:
\begin{corollary}\label{thm:flat_pieces_of_F_have_LS_measure_0}
Let $0 < \lambda \leq 1$ and $\alpha \in (0, 1)$. Then $A_{\lambda, \alpha}^{+} \in {\mathcal{B}}(\R)$, and
\[
\mu_F\big(A_{\lambda, \alpha}^{+}\big) = 0\,.
\]
In particular, if $\xi < \eta$, then
\[
\mu_F\big(\{x \in \R : F(x) = \alpha\}\big) = \Delta F(\xi) = \alpha - F(\xi-)\,.
\]
\end{corollary}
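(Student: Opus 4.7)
The plan is to read off the shape of $A_{\lambda,\alpha}^{+}$ directly from Lemma \ref{thm:flat_pieces_of_F} and then compute its Lebesgue--Stieltjes measure using the standard formulas $\mu_F\bigl((x,y]\bigr) = F(y)-F(x)$ and $\mu_F\bigl((x,y)\bigr) = F(y-)-F(x)$ recalled above. First I would dispose of the case $\xi = \eta$, where Lemma \ref{thm:flat_pieces_of_F}\,(ii) gives $A_{\lambda,\alpha}^{+} = \emptyset$, which is trivially a Borel set of $\mu_F$-measure $0$.

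In the case $\xi < \eta$, Lemma \ref{thm:flat_pieces_of_F}\,(i) tells us that $F(\xi) = \alpha = F(\eta-)$ and that $A_{\lambda,\alpha}^{+}$ equals either $(\xi, \eta)$ (when $F(\eta) > \alpha$) or $(\xi, \eta]$ (when $F(\eta) = \alpha$); in both subcases $A_{\lambda,\alpha}^{+} \in {\mathcal{B}}(\R)$. In the first subcase, $\mu_F\bigl((\xi,\eta)\bigr) = F(\eta-) - F(\xi) = \alpha - \alpha = 0$. In the second subcase, $\Delta F(\eta) = F(\eta)-F(\eta-) = \alpha - \alpha = 0$, so $\mu_F\bigl((\xi,\eta]\bigr) = F(\eta)-F(\xi) = \alpha - \alpha = 0$. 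Hence $\mu_F\bigl(A_{\lambda,\alpha}^{+}\bigr) = 0$ in every case.

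For the supplement assume $\xi < \eta$. By Corollary \ref{thm:description_of_flat_pieces_in_general}\,(i) we have the disjoint decomposition
\[
\{x \in \R : F(x) = \alpha\} = A_{\lambda,\alpha}^{+} \dotcup \{\xi\}\,.
\]
Since $\mu_F\bigl(\{\xi\}\bigr) = \Delta F(\xi) = F(\xi) - F(\xi-) = \alpha - F(\xi-)$ by the consequence of the Lebesgue--Stieltjes theorem recalled above (and using $F(\xi) = \alpha$ from Lemma \ref{thm:flat_pieces_of_F}\,(i)), additivity together with the first part gives
\[
\mu_F\bigl(\{x \in \R : F(x) = \alpha\}\bigr) = 0 + \Delta F(\xi) = \alpha - F(\xi-)\,,
\]
which is the claimed identity.

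There is no real obstacle here; the whole argument is essentially a bookkeeping step once Lemma \ref{thm:flat_pieces_of_F} and Corollary \ref{thm:description_of_flat_pieces_in_general} are in hand. The only point demanding attention is to keep track of the two subcases $F(\eta) > \alpha$ versus $F(\eta) = \alpha$, and to notice that in the latter the right endpoint $\eta$ contributes $\mu_F(\{\eta\}) = \Delta F(\eta) = 0$, so that whether the interval is half-open or open makes no difference for the measure.
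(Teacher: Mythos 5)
Your proof is correct and follows essentially the same route as the paper: both reduce everything to the interval description of $A_{\lambda,\alpha}^{+}$ supplied by Lemma \ref{thm:flat_pieces_of_F} and then evaluate $\mu_F$ on that interval, splitting into the subcases $F(\eta)>\alpha$ and $F(\eta)=\alpha$. The only differences are cosmetic: you invoke the recalled identity $\mu_F\bigl((x,y)\bigr)=F(y-)-F(x)$ where the paper re-derives it by writing $(\xi,\eta)$ as an increasing union of half-open intervals, and you spell out the supplementary identity via Corollary \ref{thm:description_of_flat_pieces_in_general}, a step the paper leaves implicit.
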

\begin{proof}
Nothing is to prove if $A_{\lambda, \alpha}^{+} = \emptyset$. So, let $A_{\lambda, \alpha}^{+} \not= \emptyset$. Then $\eta - \xi = \Delta F^{\wedge}(\alpha) > 0$.

Suppose first that $F(\eta) > \alpha$. Then
\[
A_{\lambda, \alpha}^{+} = (\xi, \eta) = \bigcup_{n = 1}^{\infty}\big(\xi, \eta - \frac{1}{n}\big]\,.
\]
Consequently, since in general $F(x) = \alpha = F(\xi)$ for all $x \in (\xi, \eta)$, it follows that
\[
\mu_F\big(A_{\lambda, \alpha}^{+}\big) = \lim_{n \to \infty}\mu_F\Big(\big(\xi, \eta - \frac{1}{n}\big]\Big) = \lim_{n \to \infty}\big(F(\eta - \frac{1}{n}) - F(\xi)\big) = \alpha - \alpha = 0\,.
\]
Now suppose that $F(\eta) = \alpha$. Then $\eta \in A_{\lambda, \alpha}^{+}$, and it follows that $F$ is continuous at $\eta$. Thus, $\mu_F(\{\eta\}) = \Delta F(\eta) = 0$. Since in this case
\[
A_{\lambda, \alpha}^{+} = (\xi, \eta) \dotcup \{\eta\}\,,
\]
it consequently follows that
\[
\mu_F\big(A_{\lambda, \alpha}^{+}\big) = \lim_{n \to \infty}\big(F(\eta - \frac{1}{n}) - F(\xi)\big) + \mu_F(\{\eta\}) = \alpha - \alpha + 0 = 0\,.
\]
\end{proof}
Next, we are going to reveal in detail that the function $F$ is almost ``left-invertible'' at every $x \in \R$ which does not belong to the preimage $A_{\lambda, \alpha}^{+}$ of a ``flat piece'' of $F$. More precisely:
\begin{theorem}\label{thm:inversion_almost_surely}
Let $0 < \lambda \leq 1$. Assume that $0 < F_\lambda < 1$ $\mu_F$-almost everywhere. Then
\[
{\text{id}}\,_{\R} = F^{\wedge} \circ F_\lambda \hspace{1cm} \mu_F\text{-almost everywhere}\,.
\]
In particular, if $0 < F < 1$ $\mu_F$-almost everywhere, then
\[
{\text{id}}\,_{\R} = F^{\wedge} \circ F \hspace{1cm} \mu_F\text{-almost everywhere}\,.
\]
\end{theorem}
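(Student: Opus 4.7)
The plan is to leverage Remark \ref{thm:inversion_inequality_on_Omega} for the easy direction and reduce the remaining direction to a controllable countable union of null sets. The hypothesis $0 < F_\lambda < 1$ $\mu_F$-almost everywhere places us (off a $\mu_F$-null set) in the regime of Remark \ref{thm:inversion_inequality_on_Omega}, which supplies $F^{\wedge}(F_\lambda(x)) \leq x$. It therefore suffices to show that the ``bad set''
\[
B := \{x \in \R : 0 < F_\lambda(x) < 1 \text{ and } F^{\wedge}(F_\lambda(x)) < x\}
\]
satisfies $\mu_F(B) = 0$.

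For any $x \in B$, I would set $\alpha := F_\lambda(x) \in (0,1)$ and $\xi := F^{\wedge}(\alpha)$. Since trivially $F_\lambda(x) \leq \alpha$, one has $x \in A_{\lambda, \alpha}$, and because $\xi < x$ by assumption, in fact $x \in A_{\lambda, \alpha}^{+}$. By the contrapositive of Lemma \ref{thm:flat_pieces_of_F}\,(ii), the non-emptiness of $A_{\lambda, \alpha}^{+}$ forces $\xi < \eta := F^{\vee}(\alpha)$, so by \eqref{eqn:gen_inverses} we obtain $\Delta F^{\wedge}(\alpha) = F^{\wedge}(\alpha+) - F^{\wedge}(\alpha) = \eta - \xi > 0$. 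In words, $\alpha$ necessarily lies in the jump set $J_{F^\wedge}$ of the non-decreasing function $F^{\wedge}$ on $(0,1)$.

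Since $F^\wedge$ is monotone, $J_{F^\wedge}$ is at most countable, yielding the inclusion
\[
B \subseteq \bigcup_{\alpha \in J_{F^\wedge}} A_{\lambda, \alpha}^{+}\,.
\]
By Corollary \ref{thm:flat_pieces_of_F_have_LS_measure_0}, each $A_{\lambda, \alpha}^{+}$ is a Borel set with $\mu_F(A_{\lambda, \alpha}^{+}) = 0$, and a countable union of such sets is again $\mu_F$-null. This delivers $\mu_F(B) = 0$ and hence the almost-everywhere identity ${\text{id}}\,_{\R} = F^{\wedge} \circ F_\lambda$. The ``in particular'' clause follows at once from $F_1 = F$, applying the result with $\lambda = 1$.

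The conceptual obstacle - which is in fact resolved upstream - is to identify the exceptional values $\alpha = F_\lambda(x)$ at which left-inversion fails with the jump points of $F^{\wedge}$; without the detailed flat-piece analysis of Lemma \ref{thm:flat_pieces_of_F} one would have no reason to expect $B$ to be governed by a countable parameter set rather than a potentially uncountable one. Once Lemma \ref{thm:flat_pieces_of_F} and Corollary \ref{thm:flat_pieces_of_F_have_LS_measure_0} are available, the proof reduces to an assembly of previously established facts.
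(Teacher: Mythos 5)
Your proposal is correct and follows essentially the same route as the paper's own proof: the exceptional set you exhibit, namely $\{F_\lambda = 0\} \cup \{F_\lambda = 1\} \cup \bigcup_{\alpha \in J_{F^{\wedge}}} A_{\lambda, \alpha}^{+}$, is exactly the null set $N_\lambda$ constructed there, and both arguments rest on Remark \ref{thm:inversion_inequality_on_Omega}, the equivalence in Lemma \ref{thm:flat_pieces_of_F}, and Corollary \ref{thm:flat_pieces_of_F_have_LS_measure_0}. The only cosmetic difference is that you argue via containment of the ``bad set'' in a countable union of null sets, whereas the paper fixes the null set first and verifies the identity pointwise on its complement.
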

\begin{proof}
Let $0 < \lambda \leq 1$. Consider the Borel set
\[
N_\lambda : = \{x \in \R : F_\lambda(x) = 0\} \dotcup \{x \in \R : F_\lambda(x) = 1\} \dotcup \bigcup_{\alpha \in  J_{F^{\wedge}}} \hspace{-0.2cm} A_{\lambda, \alpha}^{+}\,,
\]
where $J_{F^{\wedge}} : = \{\alpha \in (0, 1) : \Delta F^{\wedge}(\alpha) > 0 \}$ denotes the set of all jumps of the function $F^{\wedge}$.\footnote{Note that by construction $N_\lambda = \{x \in \R : F_\lambda(x) = 0\} \dotcup \{x \in \R : F_\lambda(x) = 1\}$ if $J_{F^{\wedge}} = \emptyset$.}
Since the (left-continuous) function $F^{\wedge} : (0, 1) \longrightarrow \R$ is non-decreasing, $J_{F^{\wedge}}$ is at most countable. Hence, if $J_{F^{\wedge}} \not= \emptyset$, there exists a subset $\M$ of $\N$, and a sequence $(\alpha_n)_{n \in \M}$, consisting of pairwise distinct elements $\alpha_n \in J_{F^{\wedge}}$, such that $J_{F^{\wedge}} = \{\alpha_n : n \in \N\} $. Thus, $\bigcup_{\alpha \in  J_{F^{\wedge}}}{}A_{\lambda, \alpha}^{+} = \ldju{n \, \in \, \M}{}\,A_{\lambda, \alpha_n}^{+}$. Corollary \ref{thm:flat_pieces_of_F_have_LS_measure_0} therefore implies that - in any case - $\mu_F(N_\lambda) = 0$ and hence $\R \setminus N_\lambda \not= \emptyset$ (since $F$ cannot be a constant function on the whole real line).

Let $x \in \R \setminus N_\lambda$. Put $\alpha(x) : = F_\lambda(x)$. Then $0 < \alpha(x) < 1$, and $x \in \bigcap_{\alpha \in  J_{F^{\wedge}}} \big(\R \setminus A_{\lambda, \alpha}^{+}\big)$. Thus, $\xi(x) : = F^{\wedge}(\alpha(x))$ is well-defined. Consider $\eta(x) : = F^{\vee}(\alpha(x)) = F^{\wedge}(\alpha(x)+)$.

First, let $J_{F^{\wedge}} = \emptyset$. Then $\xi(x) = \eta(x)$. Lemma \ref{thm:flat_pieces_of_F} therefore implies that $A_{\lambda, \alpha(x)}^{+} = \emptyset$. In particular, $x \notin A_{\lambda, \alpha(x)}^{+}$. Hence, since $F_\lambda(x) = \alpha(x) \ngtr \alpha(x)$, it consequently follows that
\[
x \leq \xi(x) = F^{\wedge}(\alpha(x)) = F^{\wedge}(F_\lambda(x))\,,
\]
and hence $x = F^{\wedge}(F_\lambda(x))$.

Now let $J_{F^{\wedge}} \not= \emptyset$. If $\alpha(x) \notin J_{F^{\wedge}}$, it follows again that $\xi(x) = \eta(x)$ and hence
\[
x \leq \xi(x) = F^{\wedge}(\alpha(x)) = F^{\wedge}(F_\lambda(x)) \leq x\,,
\]
as above. So, let $\alpha(x) \in J_{F^{\wedge}}$.
Then $\alpha(x) = \alpha_m$ for some $m \in \M$,
and hence $A_{\lambda, \alpha(x)}^{+} = A_{\lambda, \alpha_m}^{+}$.
Since $x \in \R \setminus N_\lambda \subseteq \R \setminus A_{\lambda, \alpha(x)}^{+}$, it follows once more again that $x \leq \xi(x) = F^{\wedge}(\alpha(x))$, and hence
\[
x = \xi(x) = F^{\wedge}(\alpha(x)) = F^{\wedge}(F_\lambda(x))\,.
\]
\end{proof}
Next, we consider the set $A_{\lambda, \alpha}^{\sim}$. Again, in line with \cite{R2009}, we put $q : = F(\xi-)$ and $\beta : = \Delta F(\xi) \geq 0$. Then
\[
q + \beta = F(\xi) \stackrel{\eqref{eqn:quantile_ineq}}{\geq} \alpha \geq q\,.
\]
Obviously, we may write:
\begin{remark}
$A_{\lambda, \alpha}^{\sim} = \big\{x \in \R : x = \xi \mbox{ and } \beta \lambda \leq \alpha - q\big\}$.
\end{remark}
Moreover, by using a similar argument like that one which has shown us that the set $A_{\lambda, \alpha}$ is non-empty, we further obtain
\begin{remark}
$A_{\lambda, \alpha}^{-} = (-\infty, \xi) = \{x \in \R : x < \xi\}$.
\end{remark}
Observe that only the subset $A_{\lambda, \alpha}^{\sim}$ of $A_{\lambda, \alpha}$ does depend on the choice of $\lambda \in [0,1]$.
\subsection{The inclusion of randomness}
\label{sec:3}
In addition to our assumptions above, we now fix a given probability
space $(\Omega, \mathcal{F}, \P)$. Let $X : \Omega \longrightarrow \R$ and $V : \Omega \longrightarrow \R$ be two given random variables (on this probability space) such that $V \sim U(0,1)$ is uniformly distributed over $(0, 1)$ and independent of $X$. In the following we consider the random variable $F_V(X)$, defined on $\{V \in (0,1]\}$ as
\[
F_V(X)(\omega) : = F_{V(\omega)}(X(\omega)) = F_\lambda(x)\,,
\]
where here $\omega \in \Omega$, $\lambda : = V(\omega)$ and $x : = X(\omega)$\footnote{Since $V \sim U(0,1)$, we obviously have $\P(V \in (0,1])= 1$ and hence $\P(V \notin (0,1])= 0$.}.
Next, we have to evaluate $\P \lb F_V(X) \leq \alpha \rb = \P \lb F_V(X) \leq \alpha \mbox{ and } V \in (0,1]\rb$; i.e, we wish to calculate
\[
\P \lb F_V(X) \leq \alpha \rb
= \P \lb \left\{\omega \in \Omega: X(\omega) \in  A_{V(\omega), \alpha} \right\} \mbox{ and } V \in (0,1]\rb
\]
Due to our previous observations, we have
\[
A_{V(\omega), \alpha} =  A_{V(\omega), \alpha}^{+} \dotcup A_{V(\omega), \alpha}^{\sim} \dotcup A_{V(\omega), \alpha}^{-}
\]
for all $\omega \in \{V \in (0,1]\}$. Consequently, given the assumed independence of $V$ and $X$, Lemma \ref{thm:flat_pieces_of_F} implies that\footnote{Here, $\{X \in A_{V, \alpha}\} : = \{\omega \in \Omega: X(\omega) \in A_{V(\omega), \alpha}\}$ and $\{X \in A_{V, \alpha}^{i}\} : = \{\omega \in \Omega: X(\omega) \in A_{V(\omega), \alpha}^{i}\}$, where $i \in \{+, \sim, -\}$.
}:
\begin{eqnarray*}\label{eqn:Prob_value}
\P \lb F_V(X) \leq \alpha \rb & \!\!\! = \!\!\! & \P(X \in A_{V, \alpha}^{+}\mbox{ and } V \in (0,1]) + \P(X \in A_{V, \alpha}^{\sim}\mbox{ and } V \in (0,1])\\
& + & \P(X \in A_{V, \alpha}^{-}\mbox{ and } V \in (0,1])\\
& \!\!\! = \!\!\! & \P \lb X > \xi \mbox{ and } F(X) = \alpha \rb + \P \lb X = \xi \rb \P \lb \beta V \leq \alpha - q \mbox{ and } V \in (0,1]\rb + \P \lb X < \xi \rb\,.
\end{eqnarray*}
\noindent Apparently, to continue with the calculation of the respective probabilities, we have to consider the following two possible cases: $\beta = 0$ and $\beta > 0$:
\begin{itemize}
\item[(i)] Let $\beta = 0$. Thus, since $\alpha - q \geq 0$, it follows that
\[
\P \lb F_V(X) \leq \alpha \rb = \P \lb X > \xi \mbox{ and } F(X) = \alpha \rb + \P \lb X \leq \xi \rb
\]
\item[(ii)] Let $\beta > 0$. Since $V \sim U(0,1)$ is uniformly distributed over $(0, 1)$, we have\newline $\P \lb \beta V \leq \alpha - q \mbox{ and } V \in (0,1]\rb = \P \lb V \leq \frac{\alpha - q}{\beta } \rb = \frac{\alpha - q}{\beta}$. Hence, since $\frac{\alpha - q}{\beta} - 1 = \frac{\alpha - F(\xi)}{\beta}$, it follows that
\[
\P \lb F_V(X) \leq \alpha \rb = \P \lb X > \xi \mbox{ and } F(X) =  \alpha \rb + \lb \frac{\alpha - F(\xi)}{\beta} \rb \P \lb X = \xi \rb + \P \lb X \leq \xi \rb\,.
\]
\end{itemize}
Moreover, by taking into account that $F(\xi) = \alpha$ in case (i) (since $F$ is continuous at $\xi$ if $\beta =0$), we have arrived at the following important
\begin{lemma}\label{thm:prob_calc_and_rv_as_gen_inverse_value}
Suppose that $F : \R \longrightarrow [0,1]$ is an arbitrary distribution function. Let $\alpha \in (0, 1)$. Put $\xi : = F^{\wedge}(\alpha)$ and
$\beta : = \Delta F(\xi)$. Let $X, V$ be two random variables, both defined on the same probability space $(\Omega, {\mathcal{F}}, \P)$, such that $V \sim U(0,1)$ and $V$ is independent of $X$. Then
\[
\P \big(F_V(X) \leq \alpha \big) - \alpha = \P \big( X > \xi \mbox{ and } F(X) = \alpha \big) + c_{\beta} \big( \P \big( X = \xi \big) - \beta \big) + \big( \P \big( X \leq \xi \big) - F(\xi) \big)\,,
\]
where $c_{\beta} : = 0$ if $\beta = 0$ and $c_{\beta} : = \frac{\alpha - F(\xi)}{\beta}$ if $\beta \not= 0$.
\end{lemma}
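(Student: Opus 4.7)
My plan is essentially to organize and unify the two-case computation that the text has already laid out immediately before the lemma statement, expressing both cases in the single compact form involving the constant $c_\beta$.

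First, I would start by partitioning the event of interest according to the decomposition $A_{V,\alpha} = A_{V,\alpha}^{+} \dotcup A_{V,\alpha}^{\sim} \dotcup A_{V,\alpha}^{-}$ on the full-probability event $\{V \in (0,1]\}$, using Lemma \ref{thm:flat_pieces_of_F} to identify $\{X \in A_{V,\alpha}^{+}\}$ with $\{X > \xi \text{ and } F(X) = \alpha\}$ (which no longer depends on $V$), and the two Remarks following that lemma to identify $\{X \in A_{V,\alpha}^{\sim}\}$ with $\{X = \xi\} \cap \{\beta V \leq \alpha - q\}$ and $\{X \in A_{V,\alpha}^{-}\}$ with $\{X < \xi\}$. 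Invoking the independence of $X$ and $V$, the middle probability factors into $\P(X = \xi)\,\P(\beta V \leq \alpha - q)$.

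Second, I would evaluate $\P(\beta V \leq \alpha - q)$ in the two cases: when $\beta = 0$ the inequality is automatic because $\alpha - q \geq 0$, giving value $1$; when $\beta > 0$ the relation $q \leq \alpha \leq q + \beta = F(\xi)$ shows $(\alpha - q)/\beta \in [0,1]$, so that $\P(V \leq (\alpha - q)/\beta) = (\alpha - q)/\beta$ since $V \sim U(0,1)$. This reproduces exactly the two displayed identities (i) and (ii) from the text, namely $\P(F_V(X) \leq \alpha)$ as a sum of three (or two) explicit probabilities.

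Third — and this is the only step that is not already written out — I would rearrange each case into the claimed identity. For $\beta = 0$, continuity of $F$ at $\xi$ gives $F(\xi) = \alpha$, and subtracting $\alpha$ from both sides of identity (i) immediately yields the claim with $c_\beta = 0$. For $\beta > 0$, I would rely on the algebraic identity $c_\beta = \tfrac{\alpha - F(\xi)}{\beta} = \tfrac{\alpha - q}{\beta} - 1$ and use $F(\xi) = q + \beta$ together with $\P(X \leq \xi) = \P(X < \xi) + \P(X = \xi)$ to expand $c_\beta(\P(X=\xi) - \beta) + (\P(X \leq \xi) - F(\xi))$ and check that it collapses to $\tfrac{\alpha - q}{\beta}\P(X = \xi) + \P(X < \xi) - \alpha$, which matches identity (ii) minus $\alpha$.

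There is no genuine obstacle here — all substantive inputs (the partition of $A_{V,\alpha}$, the geometric description of $A_{V,\alpha}^{+}$, and the probability computation for the uniform $V$) have been established earlier. The only care required is the bookkeeping in the $\beta > 0$ case, where one must consistently use $F(\xi) - q = \beta$ to reconcile the \emph{raw} coefficient $\tfrac{\alpha - q}{\beta}$ from the direct computation with the \emph{centred} coefficient $c_\beta = \tfrac{\alpha - F(\xi)}{\beta}$ appearing in the statement.
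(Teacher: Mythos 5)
Your proposal is correct and follows essentially the same route as the paper: the paper's ``proof'' of this lemma is precisely the two-case computation displayed immediately before the statement (the decomposition of $A_{V,\alpha}$, the independence argument, and the evaluation of $\P(\beta V \leq \alpha - q)$), and the lemma is introduced as a summary of it. The only thing you add is the explicit algebraic rearrangement reconciling $\tfrac{\alpha-q}{\beta}$ with $c_\beta=\tfrac{\alpha-F(\xi)}{\beta}$ via $F(\xi)=q+\beta$, which the paper leaves implicit and which you carry out correctly.
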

To conclude, let us slightly point towards the fact that Lemma \ref{thm:prob_calc_and_rv_as_gen_inverse_value} could also be viewed as a building block of a probabilistic limit theorem (whose detailed discussion would then exceed the main goal of this paper, though).
\subsection{The role of the distribution function of $X$}
\label{sec:4}
From now on, $F : =  F_X = \P(X \leq \cdot)$ is given as the distribution function of a given random variable $X$. 
\begin{proposition}\label{thm:uniform_distribution}
Let $X, V$ be two random variables, both defined on the same probability space $(\Omega, {\mathcal{F}}, \P)$, such that $V \sim U(0,1)$ and $V$ is independent of $X$. Let $F = F_X$ be the distribution function of $X$. Then $F_V(X) \sim U(0,1)$ is a uniformly distributed random variable. Moreover,
\[
\P\big(F(X) \leq \alpha\big) = \alpha = \P\big(X \leq F^{\wedge}(\alpha)\big)
\]
on the set $\big\{ \alpha \in (0,1) : F^{\wedge}(\alpha) < F^{\vee}(\alpha) \big\}$.
\end{proposition}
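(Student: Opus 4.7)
\bigskip

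\noindent\textbf{Proof plan.} My approach is to apply Lemma \ref{thm:prob_calc_and_rv_as_gen_inverse_value} in the special case where $F = F_X$ is the distribution function of $X$, and observe that three out of three ``correction terms'' in the identity
\[
\P\big(F_V(X) \leq \alpha\big) - \alpha = \P\big(X > \xi, F(X) = \alpha\big) + c_\beta\big(\P(X=\xi) - \beta\big) + \big(\P(X\leq\xi)-F(\xi)\big)
\]
vanish identically. Indeed, since $F_X = F$, we have $\P(X\leq\xi) = F(\xi)$, so the last bracket is zero. Furthermore $\P(X=\xi) = F(\xi) - F(\xi-) = \Delta F(\xi) = \beta$, so the middle term vanishes regardless of whether $\beta=0$ (in which case $c_\beta=0$) or $\beta>0$. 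This reduces the whole question to showing that the ``flat-piece'' term $\P(X>\xi, F(X)=\alpha) = 0$.

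For that term, I identify $\P(X > \xi,\,F(X)=\alpha) = \mu_F\big(\{x > \xi : F(x) = \alpha\}\big)$, which by Lemma \ref{thm:flat_pieces_of_F}\,(i)--(ii) coincides with $\mu_F(A_{\lambda,\alpha}^+)$ for any choice of $\lambda \in (0,1]$ (the set on the right is independent of $\lambda$ once $\lambda>0$). Corollary \ref{thm:flat_pieces_of_F_have_LS_measure_0} then yields $\mu_F(A_{\lambda,\alpha}^+)=0$, so this term vanishes as well. Combining everything we obtain $\P(F_V(X)\leq\alpha) = \alpha$ for every $\alpha \in (0,1)$, and since $F_V(X)$ takes values in $[0,1]$, this is exactly the statement $F_V(X) \sim U(0,1)$.

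For the ``Moreover'' part, I work on the set where $\xi = F^\wedge(\alpha) < F^\vee(\alpha) = \eta$ and compute the two probabilities separately from Corollary \ref{thm:description_of_flat_pieces_in_general}\,(i). On this set Lemma \ref{thm:flat_pieces_of_F}\,(i) gives $F(\xi) = \alpha$ and $F(\eta-) = \alpha$. Hence
\[
\P\big(X \leq F^\wedge(\alpha)\big) = \P(X \leq \xi) = F(\xi) = \alpha,
\]
which settles the right-hand equality. For the left-hand one I split $\{x : F(x) \leq \alpha\} = (-\infty,\xi) \cup \{x : F(x)=\alpha\}$ and use Corollary \ref{thm:description_of_flat_pieces_in_general}\,(i) to identify the latter set with either $[\xi,\eta)$ or $[\xi,\eta]$ depending on whether $F(\eta) > \alpha$ or $F(\eta) = \alpha$; in the first case the union is $(-\infty,\eta)$ with $\mu_F$-mass $F(\eta-) = \alpha$, and in the second case it is $(-\infty,\eta]$ with $\mu_F$-mass $F(\eta) = \alpha$. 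Either way $\P(F(X)\leq\alpha) = \alpha$.

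\noindent\emph{Main obstacle.} There is no deep difficulty here once Lemma \ref{thm:prob_calc_and_rv_as_gen_inverse_value} and Corollary \ref{thm:flat_pieces_of_F_have_LS_measure_0} are in hand; the subtlety is purely bookkeeping. One must verify the $\lambda$-independence of $A_{\lambda,\alpha}^+$ for $\lambda > 0$ (so that the Lebesgue--Stieltjes measure statement actually applies to the $\omega$-dependent events $\{X > \xi, F(X)=\alpha\}$ appearing after integrating out $V$), and one must remember that $c_\beta$ is defined conditionally so that the middle correction term cancels \textbf{both} when $\beta = 0$ (by convention) and when $\beta > 0$ (via the identity $\P(X=\xi) = \beta$).
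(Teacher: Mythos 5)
Your proposal is correct and follows essentially the same route as the paper: both reduce the first claim to the identity $\P\big(F_V(X)\leq\alpha\big)-\alpha=\P\big(X>\xi,\ F(X)=\alpha\big)$ via Lemma \ref{thm:prob_calc_and_rv_as_gen_inverse_value} (using $\P(X\leq\xi)=F(\xi)$ and $\P(X=\xi)=\beta$ when $F=F_X$), and then kill the remaining term by identifying it with $\mu_F\big(A_{\lambda,\alpha}^{+}\big)=0$ from Corollary \ref{thm:flat_pieces_of_F_have_LS_measure_0}. For the ``Moreover'' part you compute $\P\big(F(X)\leq\alpha\big)$ as the $\mu_F$-mass of $(-\infty,\eta)$ or $(-\infty,\eta]$ directly, whereas the paper splits it as $\P\big(F(X)<\alpha\big)+\P\big(F(X)=\alpha\big)=\P(X<\xi)+\P(X=\xi)$; this is only a cosmetic difference, as both rest on Lemma \ref{thm:flat_pieces_of_F}\,(i) and Corollary \ref{thm:flat_pieces_of_F_have_LS_measure_0}.
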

\begin{proof}
Let $0 < \alpha < 1$. Lemma \ref{thm:prob_calc_and_rv_as_gen_inverse_value} - applied to $F = F_X$ - directly leads to
\[
\P \big(F_V(X) \leq \alpha \big) - \alpha = \P \big( X > \xi \mbox{ and } F(X) = \alpha  \big)\,.
\]
Corollary \ref{thm:flat_pieces_of_F_have_LS_measure_0} further implies that for any $0 < \lambda \leq 1$ we have
\[
\P\big( X > \xi \mbox{ and } F(X) = \alpha  \big) = \P\big(X \in  A_{\lambda, \alpha}^{+}\big) = \mu_F\big(A_{\lambda, \alpha}^{+}\big) = 0\,.
\]
Thus, we have
\begin{equation}\label{eqn:uniform_distr}
\P\big(F_V(X) \leq \alpha \big) = \alpha \text{ for any } 0 < \alpha < 1\,.
\end{equation}
Consequently, $\sigma$-additivity of the probability measure $\P$ allows one to continuously extend \eqref{eqn:uniform_distr} to the whole real line. Hence, $F_V(X) \sim U(0,1)$ is uniformly distributed.

Now let $\alpha \in (0,1)$ such that $\xi : = F^{\wedge}(\alpha) < F^{\vee}(\alpha) = : \eta$. Since $F$ is the distribution function of $X$, we have $\mu_F = \P(X \in \cdot)$. Thus, Corollary \ref{thm:flat_pieces_of_F_have_LS_measure_0} leads to
\[
\P\big(F(X) = \alpha \big) = \mu_F\big(F = \alpha\big) = \alpha - F(\xi-) = \P(X = \xi)\,.
\]
Since always
\[
\P\big(F(X) < \alpha \big) = \P(X < \xi)\,,
\]
it follows that
\[
\P\big(F(X) \leq \alpha \big) = \P(X \leq \xi) = F(\xi) = \alpha\,,
\]
and we are done.
\end{proof}
\begin{remark}
It is well-known that in the case of a \textit{continuous} distribution function, $G = G_X$ say, $G(X)$ is uniformly distributed over $(0,1)$ (cf. e.g. \cite[Proposition 3.1]{EH2013}). However, continuity of $G$ is even a necessary condition for $G(X)$ being uniformly distributed over $(0,1)$. Else there were some $x_0 \in \R$ such that 
\[
0 < \Delta G(x_0) = G(x_0) - G(x_0-) = \P(X = x_0) \leq \P(G(X) = G(x_0)) = 0
\]
which would be a contradiction. Consequently, if $F$ has non-zero jumps, $F_V(X)$ still would be uniformly distributed over $(0,1)$ as opposed to $F(X)$.      
\end{remark}
In order to complete the proof of statement of Proposition 2.1 in \cite{R2009}, let us recall that the assumed independence of the random variables $X$ and $V$ implies that the bivariate distribution function $F_{(V, X)}$ of the random vector $(V, X)$ coincides with the product of the distribution functions $F_V$ and $F_X$. Moreover, since $V \sim U(0,1)$, it follows that on ${\mathcal{B}}\big((0,1]\big)$ $\mu_{F_V} = \P(V \in \cdot)$ coincides with the Lebesgue measure $m$. Hence, if $\Phi : {\R}^2 \longrightarrow \R$ denotes an arbitrary non-negative (or bounded) Borel function on $\big({\R}^2 , {\mathcal{B}}({\R}^2), \mu_{F_V} \otimes \mu_{F_X} \big)$, an immediate application of the Fubini-Tonelli Theorem leads to
\begin{equation}\label{eqn:how_to_ignore_V}
\E\big[\Phi(V, X)\big] = \E\big[\Phi(V, X)\ind_{\{V \in (0, 1]\}}\big]
= \int_{0}^{1}\E\big[\Phi(\lambda, X)\big]\, m(\text{d}\lambda) = \int_{0}^{1}\Big(\int_{\R}\Phi(\lambda, x)\, \mu_F(\text{dx}) \Big)m(\text{d}\lambda)
\end{equation}
\begin{theorem}\label{thm:Rueschendorf}
Let $X, V$ be two random variables, defined on the same probability space $(\Omega, {\mathcal{F}}, \P)$, such that $V \sim U(0,1)$ and $V$ is independent of $X$. Let $F =F_X$ be the distribution function of the random variable $X$.
Then 
\[
X = F^{\wedge}\big(F_V(X)\big) = F^{\wedge}\big(F(X-) + V \Delta F(X)\big) \hspace{0.5cm} \P\text{-almost surely}.
\]
If in addition $\P\big(0 < F(X) < 1\big) = 1$ (for example, if $F$ is continuous), then
\[
X = F^{\wedge}\big(F(X)\big) \hspace{0.5cm} \P\text{-almost surely}\,.
\]
\end{theorem}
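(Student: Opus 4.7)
The plan is to derive both assertions from Theorem \ref{thm:inversion_almost_surely} via the Fubini-type identity \eqref{eqn:how_to_ignore_V}. The second equality in the first displayed line is nothing more than the definition of the R\"uschendorf transform $F_V(X) = F(X-) + V\Delta F(X)$, so the substantive task is to prove $X = F^{\wedge}(F_V(X))$ $\P$-almost surely.

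First I would record, from Proposition \ref{thm:uniform_distribution}, that $F_V(X) \sim U(0, 1)$, which makes $F^{\wedge}(F_V(X))$ $\P$-a.s.\ well defined. Introduce the Borel set
\[
B := \big\{(\lambda, x) \in (0, 1] \times \R : F_\lambda(x) \in (0, 1) \text{ and } x \neq F^{\wedge}(F_\lambda(x))\big\},
\]
so that the event $\{X \neq F^{\wedge}(F_V(X))\}$ is contained, up to a $\P$-null set, in $\{(V, X) \in B\}$. Feeding $\Phi = \ind_B$ into \eqref{eqn:how_to_ignore_V} produces
\[
\P\big((V, X) \in B\big) = \int_0^1 \mu_F\big(\{x \in \R : (\lambda, x) \in B\}\big)\, m(\d\lambda).
\]
For each fixed $\lambda \in (0, 1)$, Theorem \ref{thm:inversion_almost_surely} forces the inner integral to vanish, \emph{provided} the hypothesis $0 < F_\lambda < 1$ $\mu_F$-a.e.\ holds. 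Because $\mu_F = \P(X \in \cdot)$ is a probability measure, the sets $\{F = 0\}$ and $\{F(\cdot-) = 1\}$ are $\mu_F$-null: each is empty or a one-sided unbounded interval of the form $(-\infty, c)$, $(-\infty, c]$, $(c, \infty)$, or $[c, \infty)$ whose Lebesgue--Stieltjes mass is readily computed to be zero. For $0 < \lambda < 1$ one has the equivalences $F_\lambda(x) = 0 \Leftrightarrow F(x) = 0$ and $F_\lambda(x) = 1 \Leftrightarrow F(x-) = 1$, so the hypothesis is satisfied. Consequently the integrand vanishes for Lebesgue-a.e.\ $\lambda$, yielding $\P((V, X) \in B) = 0$ and proving the first identity.

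The second assertion is an even more direct application: $\P(0 < F(X) < 1) = 1$ translates verbatim into $0 < F < 1$ $\mu_F$-a.e., which is the hypothesis of the ``in particular'' part of Theorem \ref{thm:inversion_almost_surely}. That result delivers $\text{id}_\R = F^{\wedge} \circ F$ $\mu_F$-almost everywhere, and pushing this identity back through $X$ (i.e., using $\mu_F = \P(X \in \cdot)$) gives $X = F^{\wedge}(F(X))$ $\P$-a.s.; when $F$ is continuous, $F(X) \sim U(0, 1)$ is classical, so the hypothesis is automatic. The only genuine obstacle I anticipate is the slice-by-slice verification of the $0 < F_\lambda < 1$ condition; it is routine but relies on the fact that $F$ is a \emph{bona fide} distribution function (equivalently, $\mu_F$ is a probability measure), a point at which the argument would break down for an arbitrary bounded right-continuous non-decreasing $F$.
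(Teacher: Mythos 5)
Your argument is correct, and its skeleton is the same as the paper's: reduce to $\lambda$-slices via the Fubini identity \eqref{eqn:how_to_ignore_V} and invoke Theorem \ref{thm:inversion_almost_surely} on each slice. There are, however, two local differences worth recording. First, where the paper verifies the hypothesis ``$0 < F_\lambda < 1$ $\mu_F$-almost everywhere'' only for $m$-almost every $\lambda$ --- by running $\Phi = \ind_{B_\lambda}$ through \eqref{eqn:how_to_ignore_V} and exploiting $F_V(X) \sim U(0,1)$ to conclude $\int_0^1 \mu_F(F_\lambda = 0)\, m(\d\lambda) = 0$ --- you verify it deterministically for \emph{every} $\lambda \in (0,1)$, using that for such $\lambda$ one has $\{F_\lambda = 0\} = \{F = 0\}$ and $\{F_\lambda = 1\} = \{F(\cdot -) = 1\}$, two one-sided intervals of $\mu_F$-measure zero because $\mu_F$ is a probability measure. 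This is a genuine (if small) strengthening: it decouples the hypothesis check from Proposition \ref{thm:uniform_distribution}, it shows the only problematic slice is $\lambda = 1$ (where $\{F_1 = 1\}$ can carry positive mass, e.g.\ for $F = \ind_{[0,\infty)}$), and by packaging the exceptional set as a single jointly Borel set $B$ it sidesteps the joint measurability of $(\lambda, x) \mapsto \ind_{N_\lambda}(x)$ that the paper's formula $\P(X \in N_V,\, V \in A) = \int_A \mu_F(N_\lambda)\, m(\d\lambda)$ tacitly requires. Second, for the ``in addition'' assertion the paper argues by sandwiching, $X = F^{\wedge}(F_V(X)) \leq F^{\wedge}(F(X)) \leq X$ via monotonicity of $F^{\wedge}$ and Remark \ref{thm:inversion_inequality_on_Omega}, whereas you apply the ``in particular'' clause of Theorem \ref{thm:inversion_almost_surely} (the slice $\lambda = 1$) directly and push the $\mu_F$-null set through $X$; both are valid, and yours makes that clause of Theorem \ref{thm:inversion_almost_surely} do the work it was evidently stated for.
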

\begin{proof}
Let $B_\lambda : = \{x \in \R : F_\lambda(x) = 0\}$, where $0 < \lambda \leq 1$. Then
\[
\P\big( F_\lambda(X) = 0 \big) = \E\big[\ind_{B_\lambda}(X)\big]\,.
\]
On the other hand, equality \ref{eqn:how_to_ignore_V} clearly implies
\[
\P\big( F_V(X) = 0 \big) = \int_{0}^{1}\E\big[\ind_{B_\lambda}(X)\big]\,m(\text{d}\lambda)\,.
\]
Hence, since $F_V(X) \sim U(0,1)$,
it follows that $\int_{0}^{1}\E\big[\ind_{B_\lambda}(X)\big]\,m(\text{d}\lambda) = 0$, implying that for $m$-almost all $\lambda \in (0, 1]$ we have
\[
\mu_F\big(F_\lambda = 0 \big) = \P\big( F_\lambda(X) = 0 \big) = \E\big[\ind_{B_\lambda}(X)\big] = 0\,.
\]
Similarly, we obtain
\[
\mu_F\big(F_\lambda = 1 \big) = 0
\]
for $m$-almost all $\lambda \in (0, 1]$. Hence, there exists an $m$-null set $L \in {\mathcal{B}}\big((0, 1]\big)$ such that 
$0 < F_\lambda < 1$ $\mu_F$-almost everywhere for all $\lambda \in (0,1]\setminus L =: A$.

Thus, given the construction in the proof of Theorem \ref{thm:inversion_almost_surely}, it follows that for all $\lambda \in A$ there exists a $\mu_F$-Borel null set $N_\lambda$, such that
for any $x \in \R\setminus N_\lambda$ the value $F^{\wedge}\big(F_\lambda(x)\big)$ is well-defined and satisfies $F^{\wedge}\big(F_\lambda(x)\big) = x$.
Hence, since
\[
\P\big(X \in N_V \mbox{ and } V \in A \big) \stackrel{\eqref{eqn:how_to_ignore_V}}{=} 
\int_{A} \P\big(X \in N_\lambda \big) m(\text{d}\lambda) = \int_{A} \mu_F\big(N_\lambda\big) m(\text{d}\lambda) = 0\,,
\]
it consequently follows $X = F^{\wedge}\big(F_V(X)\big) = F^{\wedge}\big(F(X-) + V \Delta F(X)\big)$ on the set $\Omega \setminus N \subseteq \{0 < V \leq 1\}$, where $N : = \{V \notin A\} \dotcup \{X \in N_V \mbox{ and } V \in A\}$.

If in addition $\P(0 < F(X) < 1) = 1$, $F^{\wedge}(F(X))$ is well-defined $\P$-a.\,s. Consequently, since also $F^\wedge$ is non-decreasing, there exists a $\P$-null set $\tilde{N}$, satisfying $\Omega\setminus\tilde{N} \subseteq \Omega\setminus N \subseteq \{0 < V \leq 1\}$, such that 
\[
X(\omega) = F^{\wedge}\big(F(X(\omega)-) + V(\omega) \Delta F(X(\omega))\big) \leq F^{\wedge}(F(X(\omega))) \leq X(\omega)
\]
for all $\omega \in \Omega\setminus\tilde{N}$.
\end{proof}
\begin{remark}
One might be easily lead to assume that already a direct application of Proposition \ref{thm:in_the_quantiles_range} implies the first statement of Theorem \ref{thm:Rueschendorf}. However, in the first instance Proposition \ref{thm:in_the_quantiles_range} only implies that the equality $X = F^{\wedge}\big(F_V(X)\big) = F^{\wedge}\big(F(X-) + V \Delta F(X)\big)$ at least holds on the set $D : = \{\omega \in \Omega : \Delta F(X(\omega)) > 0 \mbox{ and } 0 < V(\omega) < 1\}$. Now consider $N : = \Omega \setminus D$. Then 
\[
\P(N) = \P(\{\Delta F(X) = 0\}) = \P(\{\Delta F_V(X) = 0\}) = \P(U = Y)\,,
\]
where $U : = F_V(X) \sim U(0,1)$ and $Y : = F_V(X-) = U - V \Delta F(X)$. However, in general we don't know whether $U$ is independent of $Y$.   
\end{remark}
For the convenience of the reader, we conclude our paper with a full proof of the \textit{general} version of Sklar's Theorem, built on Theorem \ref{thm:Rueschendorf} (cf. also the proof of Theorem 1.2 in \cite{MS2012}, respectively the short proof of Lemma 3.2 in \cite{MS1975}), complemented with another interesting and seemingly novel observation (Remark \ref{thm:obsv_on_Sklars_thm}), induced by Lemma \ref{thm:flat_pieces_of_F}.
\begin{corollary}[Sklar's Theorem]\label{thm:Sklar}
Let $n \in \N$ and $F_{(X_1, \ldots, X_n)}$ be a joint $n$-variate distribution function of a random vector $(X_1, X_2, \ldots, X_n) : \Omega \longrightarrow \R^n$ with marginals $F_i : = F_{X_i}$ $(i =1,2, \ldots, n)$. Then there exist a copula $C_F$ such that for all $(x_1, x_2, \ldots, x_n) \in {{\R}^n}$
\[
F_{(X_1, \ldots, X_n)}(x_1, x_2, \ldots, x_n) = C_F(F_1(x_1), F_2(x_2), \ldots, F_n(x_n))\,.
\]
If all $F_i$ are continuous, then the copula $C_F$ is unique. Otherwise, $C$ is uniquely determined on $\prod_{i=1}^{n} F_i(\R)$. Conversely, if $C$ is a copula and $H_1, H_2, \ldots, H_n$ are distribution functions, then the function $F$ defined by
\[
F(x_1, x_2, \ldots, x_n) : = C(H_1(x_1), H_2(x_2), \ldots, H_n(x_n))
\]
is a joint distribution function with marginals $H_1, H_2, \ldots, H_n$.
\end{corollary}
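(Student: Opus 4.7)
The plan is to leverage the full strength of Theorem \ref{thm:Rueschendorf} componentwise. First, I would enlarge the underlying probability space (if necessary) to accommodate a random variable $V \sim U(0,1)$ that is independent of the random vector $(X_1,\ldots,X_n)$; then in particular $V$ is independent of each $X_i$. For each $i \in \{1,\ldots,n\}$ set
\[
U_i := F_{i,V}(X_i) = F_i(X_i-) + V\,\Delta F_i(X_i)\,.
\]
By Proposition \ref{thm:uniform_distribution} applied to the pair $(X_i, V)$, each $U_i \sim U(0,1)$, and by Theorem \ref{thm:Rueschendorf}, $X_i = F_i^{\wedge}(U_i)$ $\P$-almost surely. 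Define $C_F$ as the joint $n$-variate distribution function of $(U_1,\ldots,U_n)$. Since each $U_i$ is uniform on $(0,1)$, $C_F$ is by definition a copula.

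Next I would exploit the Galois-type equivalence
\[
F_i^{\wedge}(u) \leq x \iff u \leq F_i(x) \qquad (u \in (0,1),\ x \in \R)\,,
\]
which is an immediate consequence of \eqref{eqn:gen_inverses} together with the right-continuity of $F_i$ (indeed, $F_i^{\wedge}(u) \leq x$ means $F_i(x+\varepsilon) \geq u$ for all $\varepsilon > 0$, hence $F_i(x) \geq u$ by right-continuity; the converse inclusion is direct). Using $X_i = F_i^{\wedge}(U_i)$ $\P$-a.s. and this equivalence yields, for all $(x_1,\ldots,x_n) \in \R^n$,
\[
F_{(X_1,\ldots,X_n)}(x_1,\ldots,x_n) = \P\bigl(F_i^{\wedge}(U_i) \leq x_i \text{ for all } i\bigr) = \P\bigl(U_i \leq F_i(x_i) \text{ for all } i\bigr) = C_F\bigl(F_1(x_1),\ldots,F_n(x_n)\bigr)\,,
\]
which is the asserted Sklar decomposition.

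For uniqueness, observe that if all $F_i$ are continuous, then each $F_i(\R)$ is dense in $[0,1]$; any two copulas agreeing on $\prod_{i=1}^n F_i(\R)$ must then agree on $[0,1]^n$ by continuity of copulas, giving uniqueness of $C_F$. In the non-continuous case, the identity above pins down $C_F$ only on $\prod_{i=1}^n F_i(\R)$. For the converse, given a copula $C$ with associated uniformly distributed random vector $(U_1,\ldots,U_n)$ (on any probability space carrying such a vector) and arbitrary univariate distribution functions $H_1,\ldots,H_n$, I would set $Y_i := H_i^{\wedge}(U_i)$ and apply the same Galois equivalence to obtain $\P(Y_i \leq x_i) = \P(U_i \leq H_i(x_i)) = H_i(x_i)$ as well as
\[
\P(Y_1 \leq x_1,\ldots,Y_n \leq x_n) = C\bigl(H_1(x_1),\ldots,H_n(x_n)\bigr)\,,
\]
so that $C \circ (H_1,\ldots,H_n)$ is indeed a joint distribution function with marginals $H_i$.

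The main subtlety will be the initial enlargement of the probability space to host an independent uniform $V$; once this is in place, Theorem \ref{thm:Rueschendorf} does the heavy lifting, and the Galois equivalence translates the almost-sure inversion $X_i = F_i^{\wedge}(U_i)$ into the pointwise identity at the level of distribution functions. The uniqueness clause in the discontinuous case requires only a careful bookkeeping of where the distribution of $(U_1,\ldots,U_n)$ is actually determined by the marginal constraint coming from $F_{(X_1,\ldots,X_n)}$.
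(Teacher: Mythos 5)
Your proposal is correct and follows essentially the same route as the paper: define $U_i$ via the R\"{u}schendorf transform, invoke Theorem \ref{thm:Rueschendorf} to get $X_i = F_i^{\wedge}(U_i)$ $\P$-a.s., take $C_F$ to be the joint distribution function of $(U_1,\ldots,U_n)$, and translate via the equivalence $\{u \in (0,1): F_i^{\wedge}(u)\leq x_i\} = \{u \in (0,1): u \leq F_i(x_i)\}$. The only differences are cosmetic or additive: you use a single $V$ independent of the whole vector (and note the need to enlarge the probability space) where the paper writes $V_i$ per coordinate, and you actually supply arguments for the uniqueness clause and for the converse direction via the quantile transform $Y_i := H_i^{\wedge}(U_i)$, both of which the paper's proof leaves implicit.
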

\begin{proof}
Let $i \in \{1,2, \ldots, n\}$ and $V_i \sim U(0,1)$. On $\{0 < V_i \leq 1\}$ put $U_i : = V_i \Delta F_i(X_i) + F_i({X_i}-)$. According to Theorem \ref{thm:Rueschendorf} there exist null sets $M_1, M_2, \ldots, M_n \in \mathcal{F}$, such that on $\Omega\setminus M_i \subseteq \{0 < V \leq 1\}$ $Z_i : = {F_i}^{\wedge}(U_i)$ is well-defined and satisfies $X_i \equiv Z_i$ for every $i \in \{1,2, \ldots, n\}$. Thus, $\P(M) = 0$, where $M : = \bigcup_{i=1}^n M_i$.

Let $F_{(X_1, \ldots, X_n)}(x_1, x_2, \ldots, x_n) : = \P(X_1 \leq x_1, X_2 \leq x_2, \ldots, X_n \leq x_n)$ denote the $n$-variate distribution function of the random vector $(X_1, X_2, \ldots, X_n)$. Consider the copula
\[
C_F(\gamma_1, \gamma_2, \ldots, \gamma_n) : = \P( U_1 \leq \gamma_1, U_1 \leq \gamma_2, \ldots, U_n \leq \gamma_n)\,,
\]
where $(\gamma_1, \gamma_2,\ldots, \gamma_n) \in [0,1]^n$. Since
\[
\{u \in (0,1 ): {F_i}^{\wedge}(u) \leq x_i\} = \{u \in (0,1): u \leq F_i(x_i)\}
\]
for all $i \in \{1,2, \ldots, n\}$ and $\P(M) = 0$, it consequently follows
\begin{align*}
{} & C_F(F_1(x_1), F_2(x_2), \ldots, F_n(x_n)) = \P \big( \{ Z_1 \leq x_1, Z_2 \leq x_2, \ldots, Z_n \leq x_n \} \cap \R \setminus M \big)\\
= \,\, & \P \big( \{ X_1 \leq x_1, X_2 \leq x_2, \ldots, X_n \leq x_n \} \cap \R \setminus M \big) = F_{(X_1, \ldots, X_n)}(x_1, x_2, \ldots, x_n)\,.
\end{align*}
\end{proof}
Combining Sklar's Theorem with Lemma \ref{thm:flat_pieces_of_F}, we immediately obtain another interesting result:
\begin{remark}\label{thm:obsv_on_Sklars_thm}
Let $(\alpha_1, \alpha_2, \ldots, \alpha_n) \in (0,1)^n$, satisfying ${F_i}^{\wedge}(\alpha_i) < {F_i}^{\vee}(\alpha_i)$ for all $i \in \{1,2, \ldots, n\}$. Then
\[
C_F(\alpha_1, \alpha_2, \ldots, \alpha_n) = F_{(X_1, X_2, \ldots, X_n)}\big({F_1}^{\wedge}(\alpha_1), {F_2}^{\wedge}(\alpha_2), \ldots, {F_n}^{\wedge}(\alpha_n)\big)\,.
\]
\end{remark}

\noindent\textbf{Acknowledgements} \vspace{0.2em}

\noindent The author would especially like to thank one of two anonymous referees for their particularly thorough perusal of the paper including some important remarks and comments and an indication of a few additional and very useful references. 

\end{document}